\newcommand{\chop}{\dag}
\newcommand{\inter}{\overset{\circ}}
\def\epsilon{\varepsilon}
\renewcommand{\phi}{\varphi}
\DeclareMathOperator{\ind}{\mathrm{ind}}
\newcommand{\Out}{\text{Out}} 
\newcommand{\Aut}{\text{Aut}}
\newcommand{\Stab}{\text{Stab}}
\newcommand{\FN}{F_N}
\newcommand{\CVN}{\text{CV}_N}
\newcommand{\R}{\mathbb R} 
\newcommand{\Z}{\mathbb Z}
\newcommand{\N}{\mathbb N}
\def\strutdepth{\dp\strutbox}
\def \ss{\strut\vadjust{\kern-\strutdepth \sss}}
\def \sss{\vtop to \strutdepth{
\baselineskip\strutdepth\vss\llap{$\diamondsuit\;\;$}\null}}
\def\strutdepth{\dp\strutbox}
\def \sst{\strut\vadjust{\kern-\strutdepth \ssss}}
\def \ssss{\vtop to \strutdepth{
\baselineskip\strutdepth\vss\llap{$\spadesuit\;\;$}\null}}
\def\strutdepth{\dp\strutbox}
\def \ssh{\strut\vadjust{\kern-\strutdepth \sssh}}
\def \sssh{\vtop to \strutdepth{
\baselineskip\strutdepth\vss\llap{$\heartsuit\;\;$}\null}}
\def\bar{\overline} 
\def\tilde{\widetilde} 
\newtheorem{thm}{Theorem}[section]
\newtheorem{cor}[thm]{Corollary} 
\newtheorem{lem}[thm]{Lemma}
\newtheorem{prop}[thm]{Proposition}
\newtheorem*{thm*}{Theorem}
\newtheorem*{prop*}{Proposition}
\newtheorem*{thm-main*}{Theorem~\ref{thm:main}}
\theoremstyle{definition} 
\newtheorem{defn}[thm]{Definition}
\newtheorem*{defn*}{Definition} 
\newtheorem{rem}[thm]{Remark}
\newtheorem*{rem*}{Remark}
\numberwithin{equation}{section} 
\begin{document}

\title{Long turns, INP's and 
indices 
for free group automorphisms}

\author{Thierry Coulbois and Martin Lustig}
\address{Institut de mathématiques de Marseille\\ 
Université d'Aix-Marseille\\
39, rue Frédéric Joliot Curie \\
13453 Marseille Cedex 13\\
France\\
\href{mailto:thierry.coulbois@univ-amu.fr}{\nolinkurl{thierry.coulbois@univ-amu.fr}}\\
\href{mailto:martin.lustig@univ-amu.fr}{\nolinkurl{martin.lustig@univ-amu.fr}}
}


\keywords{Free group automorphisms, Train tracks, Nielsen path, Index}

\subjclass{20E05, 20E08, 20F65, 57R30}

\begin{abstract} 
The goal of this paper is to introduce a new tool, called {\em long
  turns}, which is a useful addition to the train track technology
for automorphisms of free groups, in that it allows one to control
periodic INPs in a train track map and hence the index of the induced
automorphism.
\end{abstract}

\maketitle

\section{Introduction}

Automorphisms of a non-abelian free group $\FN$ of finite rank $N \geq
2$ are the 
focal point of many interesting recent research
efforts. The most important class of such automorphisms are the ones
that are {\em irreducible with irreducible powers (iwip)}, also called
{\em fully irreducible} (see \S\ref{preliminaries}). Such
automorphisms can always be represented by a map $f: \Gamma \to
\Gamma$, where $\Gamma$ is a graph equipped with a marking isomorphism
$\FN \overset{\cong}{\longrightarrow} \pi_1(\Gamma)$, and $f$ has the
train track property: the map $f$ defines a {\em gate structure}
$\mathbf G = \mathbf G(f)$ on $\Gamma$, i.e. a partition of the edge
germs at every vertex into {\em gates}, which is preserved by $f$ in
that $f$ maps any {\em $\mathbf G$-legal} path to a $\mathbf G$-legal
image path. Here a path $\gamma$ in $\Gamma$ is {\em $\mathbf
  G$-legal} if at any vertex it never enters and exits through the
same gate.

In this paper we present a new tool, called {\em long turns}, which we
use to define in \S\ref{long-turns} below {\em legalizing} train track
morphisms $g: \Gamma \to \Gamma'$ (with respect to fixed gate
structures $\mathbf G$ on $\Gamma$ and $\mathbf G'$ on $\Gamma'$).
A construction device for such legalizing train track morphisms is given in Section \ref{sec:legalizing-factory}.

\begin{thm}
\label{thm:CL-main}
Let $f: \Gamma \to \Gamma$ be a train track map which represents an
iwip automorphism of $\FN$ and has positive transition matrix $M(f)$.
Let $g: \Gamma \to \Gamma$ be a legalizing train track morphism with
respect to the gate structure $\mathbf G(f)$, and assume that $g$
induces an automorphism on $\pi_1(\Gamma)$ and is gate-stable (see
Definition~\ref{def:gate-stable}).  Then:
\begin{enumerate}
\item\label{concl:main-iwip} The map $f \circ g: \Gamma \to\Gamma$ is a train track
  representative of an iwip automorphism $\phi \in \Out(\FN)$.
\item\label{concl:main-no-inp} There is no periodic INP in $\Gamma$ for the train track map $f
  \circ g$.  In particular there are no non-trivial $(f \circ
  g)$-periodic conjugacy classes in $\FN$.
\item\label{concl:main-index}
The stable index list for $\phi$ is given by the gate index list for $f$.
\end{enumerate}
\end{thm}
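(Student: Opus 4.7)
The plan is to prove (1) and (2) together, since both hinge on controlling legality of turns under $f \circ g$, and then to deduce (3) from (2) via the index formula for iwip automorphisms.

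For (1), the defining property of a legalizing morphism $g$ with respect to $\mathbf G(f)$ should be that $g$ sends every turn of $\Gamma$ to a $\mathbf G(f)$-legal turn, equivalently that every edge $e$ has $\mathbf G(f)$-legal image $g(e)$. Since $f$ is train-track for $\mathbf G(f)$, it preserves legal paths, so $f \circ g$ maps every edge to a legal, hence reduced, path; an induction on the number of applications then shows $(f \circ g)^n(e)$ is reduced for every $n$, and $f \circ g$ is a train-track map. The transition matrix factors (up to conventions) as $M(f) \cdot M(g)$; positivity of $M(f)$ together with $g$ being a $\pi_1$-automorphism (so $M(g)$ has no zero row or column) yields positivity of $M(f \circ g)$. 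A train-track representative with positive transition matrix and no periodic INP is iwip by the standard Bestvina--Handel criterion, so once (2) is established, $\phi = [f \circ g] \in \Out(\FN)$ is iwip.

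For (2), suppose for contradiction that $\rho$ is a periodic INP for $f \circ g$, decomposed as $\rho = \alpha \bar\beta$ into maximal $\mathbf G(f \circ g)$-legal subpaths meeting at a vertex $v$ through initial directions $e_1, e_2$ lying in a common $\mathbf G(f \circ g)$-gate. The legalizing hypothesis on $g$ should force $Dg(e_1)$ and $Dg(e_2)$ to lie in distinct $\mathbf G(f)$-gates, and gate-stability should bootstrap this separation under all iterates of $f \circ g$, preventing $D(f \circ g)^n$ from ever identifying $e_1$ with $e_2$. Consequently the image $(f \circ g)(\rho)$ admits no cancellation at the central vertex, and since legal images of legal paths do not shrink under a train-track map, the periodic recurrence of $\rho$ is impossible.

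For (3), I invoke the Gaboriau--Jaeger--Levitt--Lustig index formula: the stable index list of an iwip automorphism represented by a train-track map $h$ decomposes as the concatenation of a vertex part (the gate index list of $h$) and one contribution per periodic orbit of INPs. By (2) the INP part for $\phi$ is empty, so its stable index list equals the gate index list of $f \circ g$. Gate-stability of $g$ should finally ensure that $\mathbf G(f \circ g)$ agrees with $\mathbf G(f)$ at every periodic vertex, whence the gate index lists of $f$ and $f \circ g$ coincide, completing (3). The principal obstacle is part (2): ruling out \emph{every} periodic INP---not just those of bounded length---requires tracking illegal turns under all iterates of $f \circ g$, and it is here that gate-stability must carry the load, since a single composition with $g$ may legalize one illegal turn while inadvertently creating another, and the delicate point is that gate-stability must prevent any such newly created illegal turn from recurring periodically. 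Making this argument precise, within the long-turns framework introduced in Section~\ref{long-turns}, will be the technical heart of the proof.
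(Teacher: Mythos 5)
Your overall architecture matches the paper's (establish absence of periodic INPs, feed this into an irreducibility criterion, then transfer index lists), but the proposal contains a genuine error at its core and leaves the decisive step unproved. You misstate what ``legalizing'' means: it is not the condition that $g$ sends every turn (or every edge) to a legal image --- that is just the train track property, which $g$ has by hypothesis and which cannot by itself rule out INPs. Definition~\ref{legalizing} concerns \emph{long} turns: every illegal $g$-long turn $(\gamma,\gamma')$ must have legal image \emph{after cancelling the maximal common initial subpath} of $g(\gamma)$ and $g(\gamma')$. Consequently your key claim in (2) --- that the legalizing hypothesis forces $Dg(e_1)$ and $Dg(e_2)$ into distinct $\mathbf G(f)$-gates, so that $(f\circ g)(\rho)$ admits no cancellation at the central vertex --- is false, indeed the opposite of what happens: since $g$ is gate-stable, $Dg$ maps edges of a common gate into a common gate, and the proof of Lemma~\ref{lem:legalizing-implies-intrinsec} shows that for a gate-stable legalizing $g$ one even has $Dg(e_1)=Dg(e_2)$. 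Cancellation at the illegal turn is unavoidable; the content of ``legalizing'' is that this cancellation is bounded and that what survives beyond it forms a legal long turn. The paper's route to (2) is Proposition~\ref{Nielsenpaths}, which identifies periodic INPs with periodic \emph{illegal} long turns of the induced map on $LT_C(\Gamma)$, combined with Proposition~\ref{composition-legalizing} (so that $f\circ g$ is legalizing) and the observation that a legalizing map has no periodic illegal long turn. You flag this step as ``the technical heart'' and defer it, but the mechanism you sketch points in the wrong direction and would not get you there.

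In (1) the criterion you invoke is also too weak: a positive transition matrix together with the absence of periodic INPs does not suffice for full irreducibility; one additionally needs the (gate-)Whitehead graphs at all vertices to be connected (condition~(\ref{cond:gwg-connected}) of Proposition~\ref{JL-quote}) --- otherwise one can blow up a vertex with disconnected Whitehead graph and exhibit an invariant proper free factor. The paper secures this via Proposition~\ref{Wh-composition}, which uses gate-stability of $g$ to show that $Wh_{\mathbf G}^v(f)$ is a subgraph of $Wh_{\mathbf G}^v(f\circ g)$, together with connectedness of $Wh_{\mathbf G}^v(f)$ coming from the iwip hypothesis on $f$. Your treatment of (3) is essentially the paper's (Corollary~\ref{index-lists} applied to $f\circ g$, plus the equality $\mathbf G(f\circ g)=\mathbf G(f)$ from Lemma~\ref{lem:legalizing-implies-intrinsec}) and is fine once (1) and (2) are in place.
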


To explain the last statement of this theorem we recall from
\cite{GJLL} and \cite{LL3} that for every automorphism $\Phi \in
\Aut(\FN)$ with no periodic $w \in \FN \smallsetminus \{1\}$ the
induced action of a suitable positive power of $\Phi$ on $\partial
\FN$ is particularly simple: There are finitely many attractors and
finitely many repellers on $\partial \FN$, and every other orbit
accumulates positively onto one of the attractors and negatively onto
one of the repellers. The {\em index} of $\Phi$, as defined in
\cite{GJLL}, is in this special case equal to
\[
\ind (\Phi) := \frac{a(\Phi)}{2}-1 \, ,
\] 
where $a(\Phi)$ denotes the number of attractors of the $\Phi$-action
on $\partial \FN$.

For any given $\phi \in \Out(\FN)$ it has been shown in \cite{GJLL}
that up to {\em isogredience} (= conjugation by inner automorphisms),
there are only finitely many lifts $\Phi_k \in \Aut(\FN)$ of $\phi$
which satisfy $\ind(\Phi_k) > 0$. Those indices form the {\em index
  list} of $\phi$ (which is defined up to permutation and typically
given in decreasing order).  Through replacing $\phi$ by a 
suitable 
positive
power $\phi^t$ the number of terms and also their values in the index
list may increase, but eventually it becomes stable 
(with respect to passing to further powers, see \cite{GJLL}
or Section \ref{sec:index-branch-gate} below): 
this is called
the {\em stable index list} of $\phi$. It is an important invariant of
the conjugacy class of $\phi$ in $\Out(\FN)$ with interesting
structural consequences (see \cite{CH12}).  The main result of
\cite{GJLL} states that for the {\em stable index} of any $\phi \in
\Out(\FN)$, i.e. the sum of the values $\ind(\Phi_k)$ in the stable
index list of $\phi$, one has a uniform upper bound:
\begin{equation}\label{eq:indice-def}
\ind_{stab} (\phi) := \sum \ind (\Phi_k) \leq N-1
\end{equation}

On the other hand, the {\em gate index list} for any train track map
$f:\Gamma \to \Gamma$ is the list of the {\em gate indices}
\[
\ind_{\mathbf G(f)}(v_k) = \frac{g(v_k)}{2}-1
\] 
at the essential vertices $v_k$ of $\Gamma$, where $g(v_k)$ is the
number of gates at $v_k$, and $v_k$ is {\em essential} if it has at
least 3 gates and is $f$-periodic.  The advantage of the the gate
index list over the above described index list is that it can be read
off directly from the given train track map.

It turns out that Statement~\ref{concl:main-index} of
Theorem~\ref{thm:CL-main} is a consequence of
Statements~\ref{concl:main-iwip} and \ref{concl:main-no-inp}. This
follows from standard results on Outer space and $\R$-trees. For the
convenience of the reader we recall and assemble the relevant facts in
Sections~\ref{sec:index-branch-gate}.

It is a direct consequence of Nielsen-Thurston theory for surface
homeomorphisms that in the special case where $\phi$ is {\em
  geometric}, i.e. $\phi$ is induced by a homeomorphism of some
surface with boundary, the above stable index
inequality~(\ref{eq:indice-def}) becomes an equality.  In general,
however, computer experiments of the first author (see
\cite[subsection~7.3]{CLP}, and \cite{C15}) indicate that random
automorphisms have very low stable index; indeed, for up to rank $N
\leq 9$ more than half of the investigated automorphisms have index
list equal to $[\frac{1}{2}]$, to $[\frac{1}{2}, \frac{1}{2}]$, to
$[1]$ or to $[\frac{1}{2}, \frac{1}{2}, \frac{1}{2}]$.

In our subsequent work~\cite{CLP} we use Theorem~\ref{thm:CL-main} as
crucial tool to produce explicit automorphisms $\phi_I \in \Out(\FN)$
which realize as stable index list any given list $I = [j_1, \ldots,
  j_\ell]$ of positive values $j_k \in \frac{1}{2}\Z$ which satisfies
the above inequality~(\ref{eq:indice-def}), thus answering a question
posed by Handel and Mosher~\cite{HM}.

Theorem~\ref{thm:CL-main} is also used to derive in
Corollary~\ref{monoid} information about the elements of the monoid
generated by train track morphisms on a common graph with respect to a
fixed gate structure. This result is also the ``door opener'' to a
further study of {\em strata} in Outer space, in analogy to the well
known and heavily investigated strata in Teichm\"uller space, see for
instance \cite{MS}.

\smallskip
\noindent {\em Acknowledgements:} This paper came into existence
largely due to Catherine Pfaff's postdoc stay in Marseille: It is part
of a larger project on which we started working with her during the
last months of her position in Marseille. Although the content of our
discussions mostly concern the companion paper~\cite{CLP} (which was
indeed meant to be joint work with her), some of the material
presented in this paper as well must surely be influenced by having
talked to Catherine.

We would also like to thank Ilya Kapovich for some valuable comments
on indices of random automorphisms, in the context of his recent work
\cite{KP} with Pfaff.

\section{Notation and conventions}
\label{preliminaries}

Throughout this paper, $\FN$ will denote the non-abelian free group of
finite rank $N \geq 2$, and $\Out(\FN)$ the group of its outer
automorphisms.  Furthermore, we will use the following conventions and
notations:

A {\em graph} is always connected, without vertices of valence
$1$, and finite, unless it is the universal covering $\tilde \Gamma$ of
a finite connected graph $\Gamma$. For every oriented edge $e$ of
$\Gamma$ we denote by $\bar e$ the edge with reversed orientation. The
set $E^{\pm}(\Gamma)$ denotes the set of all edges $e$ including their
``inverses'' $\bar e$, while by $E^+(\Gamma)$ we mean a section
(sometimes called an {\em orientation}) of the quotient map $e \mapsto
\{e, \bar e\}$ on $E^{\pm}(\Gamma)$.

An {\em edge path} $\gamma$ in $\Gamma$ is a (possibly infinite)
sequence $\gamma = \ldots e_{i-1} e_i e_{i+1} \ldots$ of edges $e_i
\in E^\pm(\Gamma)$ where the initial vertex of any $e_i$ must agree with
the terminal vertex of $e_{i+1}$. Such an edge path $\gamma$ is {\em
  reduced} if it doesn't contain any {\em backtracking subpath}
$\gamma'$, i.e. $\gamma'$ is a finite sub-edge-path which has
coinciding initial and terminal vertex, and which is contractible
relative to its endpoints. If a finite edge path $\gamma$ is not reduced,
then it is always homotopic rel. endpoints in $\Gamma$ to a reduced
edge path $[\gamma]$, and this homotopy can be expressed as iterative
contraction of backtracking subpaths.

The {\em combinatorial length} $|\gamma|$ of a finite edge path
$\gamma$ is equal to the number of edges traversed by $\gamma$, and
$\gamma$ is {\em trivial} if $|\gamma| = 0$.

A {\em graph map} is a map $f: \Gamma \to \Gamma'$ between graphs
which maps vertices to vertices and edges $e$ to (not necessarily
reduced) edge paths $f(e)$.  If $f$ {\em has no contracted edges},
i.e. none of the image paths $f(e)$ is trivial, then $f$ induces a
{\em differential} $Df: E^\pm(\Gamma) \to E^\pm(\Gamma')$ which maps
$e$ to the initial edge of $f(e)$.

Two edges $e$ and $e'$ with same initial vertex $v$ form a {\em turn}
$(e, e')$ at $v$, and the map $Df$ induces a map $D^2f:
(e, e') \mapsto (Df(e), Df(e'))$ from the turns of $\Gamma$ to the
turns of $\Gamma'$. The turn $(e, e')$ is {\em degenerate} if we have
$e = e'$ in $E^\pm(\Gamma)$.

We say that an edge path $\gamma = \ldots e_{i-1} e_i e_{i+1} \ldots$
{\em crosses over} (or {\em uses}) a turn $(e, e')$ if for some index
$i$ one has $e_i = \bar e$ and $e_{i+1} = e'$.

To every graph map $f: \Gamma \to \Gamma'$ there is associated a
non-negative {\em transition matrix} $M(f) = (m_{e', e})_{e'\in
  E^+(\Gamma'), \, e\in E^+(\Gamma)}$, where the coefficient $m_{e',
  e}$ is the number of times that the (possibly unreduced) edge path
$f(e)$ crosses over the edge $e'$ or the inversely oriented edge $\bar
e'$ (both occurrences counted positively!).  It follows directly from
the definition that for any two graph maps $f: \Gamma \to \Gamma'$ and
$g: \Gamma' \to \Gamma''$ one has:
\[
M(g\circ f) = M(g) M(f).
\]

Recall that a non-negative square matrix $M$ is {\em primitive} if
there is a positive power $M^t$ with positive coefficients only.

An automorphism $\phi \in \Out(\FN)$ is called {\em irreducible with
  irreducible powers (iwip)} or {\em fully irreducible} if there is no
proper free factor of $\FN$ which is fixed up to conjugacy by any
positive power of $\phi$.

A graph map $f: \Gamma \to \Gamma$ is called {\em expanding} if for
every edge $e \in E^\pm(\Gamma)$ there is an exponent $t \geq 1$ such
that $|f^t(e)| \geq 2$.  If $f$ represents an iwip automorphism, then,
up to passing to a quotient graph through contracting $f$-invariant
subtrees in $\Gamma$, the map $f$ must be expanding.

\section{Gate structures for graphs}\label{sec:gates}

\begin{defn}
\label{def:gates}
(1) A {\em gate structure} $\mathbf G_v$ at a vertex $v$ of a graph
$\Gamma$ is a partition of the edges $e \in E^\pm(\Gamma)$ with
initial vertex $v$ into equivalence classes, called {\em gates}.

\smallskip
\noindent
(2) A gate structure $\mathbf G$ on $\Gamma$ is the collection of gate
structures $\mathbf G_v$ for every vertex $v$ of $\Gamma$. A graph
$\Gamma$ together with a gate structure $\mathbf G$ on $\Gamma$ has
been termed a {\em train track} in \cite{Lu_conj-iwips}.

\smallskip
\noindent
(3) Two edges $e, e' \in {E}^\pm(\Gamma)$ with same initial vertex $v$
form a {\em legal turn} $(e, e')$ (with respect to $\mathbf G$) if $e$
and $e'$ belong to distinct gates. Otherwise the turn $(e, e')$ is
called {\em illegal}.

\smallskip
\noindent
(4) A (finite or infinite) edge path $\gamma = \ldots e_{i-1}e_i
e_{i+1} \ldots$ in $\Gamma$ is called {\em legal} (with respect to
$\mathbf G$), if all of the turns $(\bar e_i, e_{i+1})$ over which
$\gamma$ {crosses} are legal.
\end{defn}

These notions, introduced in \cite{Lu_conj-iwips} and in a similar
fashion elsewhere (\cite{BF}, \cite{H}, \cite{HM}, ...), have been
inspired by the fundamental paper \cite{BH}, where the gate structure
$\mathbf G = \mathbf G(f)$ is defined through a train track map $f:
\Gamma \to \Gamma$, as explained below in
Definition~\ref{def:intrinsic} and Remark~\ref{classic-tts}.  There
are other natural occurrences of gate structures, for example given by
an edge-isometric $\FN$-equivariant map from the universal cover
$\tilde \Gamma$ to an $\R$-tree $T$, see \cite{Lu_conj-iwips}.

\begin{defn}
\label{train-track-morphism}
Let $\Gamma$ and $\Gamma'$ be graphs equipped with gate structures
$\mathbf G$ and $\mathbf G'$ respectively.  A graph map $f: \Gamma \to
\Gamma'$ is called a {\em train track morphism} if the following two
conditions hold:
\begin{enumerate}
\item $f$ has no contracted edges.
\item\label{def-cond:train-track} $f$ has the {\em train track property}: it maps legal paths to
  legal paths.
\end{enumerate}
\end{defn}

\begin{rem}
\label{tt-morphisms-again}
The reader can verify directly that condition~(\ref{def-cond:train-track}) of
Definition~\ref{train-track-morphism} is equivalent to the following
two more local conditions:
\begin{enumerate}
\item[(2'a)]
the map $D^2 f$ (see Section~\ref{preliminaries}) maps legal turns to legal turns, and
\item[(2'b)]
for every edge $e$ of $\Gamma$ the edge path $f(e)$ is legal.
\end{enumerate}
\end{rem}

\begin{rem}
\label{composition}
(a) It follows directly from Definition~\ref{train-track-morphism}
that the composition $g \circ f$ of two train track morphisms $f:
\Gamma \to \Gamma'$ and $g: \Gamma' \to \Gamma''$, with respect to the
same gate structure on $\Gamma'$, is again a train track morphism.

\smallskip
\noindent
(b) In particular, in the special case $\Gamma' = \Gamma$ and $\mathbf
G' = \mathbf G$ we note that for any edge $e$ and any integer $t \geq 0$
the edge path $f^t(e)$ is legal and hence reduced.
\end{rem}

\begin{rem}
\label{finer-coarser}
If $\Gamma$ is equipped with two gate structures $\mathbf G_1$ and
$\mathbf G_2$, such that every gate of $\mathbf G_2$ is contained in a
gate of $\mathbf G_1$, then $\mathbf G_2$ is {\em finer} than (or a
{\em refinement} of) $\mathbf G_1$, while $\mathbf G_1$ is {\em
  coarser} than ${\mathbf G_2}$.

Any train track morphism $f: \Gamma \to \Gamma'$ stays a train track
morphism if the gate structure of $\Gamma$ is replaced by a coarser
one, or the gate structure of $\Gamma'$ by a finer one.
\end{rem}

In order to pinpoint certain subtleties which will trouble us later, we define:

\begin{defn}
\label{gate-morphism}
(a)
Let $\Gamma$ and $\Gamma'$ be graphs equipped with gate structures
$\mathbf G$ and $\mathbf G'$ respectively.  A graph map $f: \Gamma \to
\Gamma'$ is called a {\em gate structure morphism} if the following two
conditions hold:
\begin{enumerate}
\item $f$ has no contracted edges.
\item The induced map $D f$ (see Section~\ref{preliminaries}) maps all
  edges in any gate of $\mathbf G$ to a common {\em image gate} of
  $\mathbf G'$. In other words: $f$ induces a well defined map $f_{\mathbf
  G}: \mathbf G \to \mathbf G'$.
\end{enumerate}

\smallskip
\noindent
(b)
If $f$ is not necessarily a gate structure morphism on all of
$\Gamma$, but induces a well defined map $f_{\mathbf G_v}: \mathbf G_v \to
\mathbf G'_{f(v)}$ on the gates at a given vertex $v$ of $\Gamma$, we
say that $f$ is a {\em local gate structure morphism at $v$}.
\end{defn}

We'd like to alert the reader that the notions of ``train track
morphisms'' and ``gate structure morphisms'' are sort of perpendicular
to each other, as neither of them implies the other: A gate structure
morphism is a train track morphism $f$ if and only if the induced map
$f_\mathbf G$ is injective and the image of every edge is legal. On the
other hand, a general train track morphism may be quite far from being
a gate structure morphism, as the identity map which passes from given
gates structure to a strict refinement (compare
Remark~\ref{finer-coarser}) shows.  However, for the special case of
graph self-maps we have the following observation which will turn out
to be often quite useful:

\begin{lem}
\label{periodic-gates}
Let the graph self-map $f: \Gamma \to \Gamma$ be a train track
morphism with respect to some gate structure $\mathbf G$ on $\Gamma$.

Then at every $f$-periodic vertex $v$ of $\Gamma$ the induced map $D^2
f$ maps legal to legal and illegal to illegal turns.  In particular,
$f$ induces a well defined bijective map from the gates at $v$ to the
gates at $f(v)$. In other words:

For every periodic vertex $v$ the map $f$ induces a local gate
structure morphism $f_{\mathbf G_v}: \mathbf G_v \to \mathbf G_{f(v)}$ which
is bijective.
\end{lem}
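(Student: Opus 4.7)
The plan is to reduce the lemma to a counting argument on preimages of gates. First I would observe the fundamental consequence of the train track property: for any gate $G'$ at $f(v)$, the preimage $Df^{-1}(G') \subseteq E_v$ must lie in a single gate at $v$. Indeed, if $e_1, e_2 \in Df^{-1}(G')$ were in distinct gates at $v$, then $(e_1,e_2)$ would be a legal turn, while $D^2f(e_1,e_2) = (Df(e_1),Df(e_2))$ would have both components in the single gate $G'$ and therefore be illegal, contradicting Condition~(2'a) of Remark~\ref{tt-morphisms-again}. Note that this step does not use periodicity.

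Second, I would turn this into a cardinality inequality. The non-empty preimages $Df^{-1}(G')$, as $G'$ ranges over the gates at $f(v)$, form a partition of $E_v$ that refines the gate partition at $v$; and since every gate at $v$ is non-empty, each must contain at least one such preimage. This yields $|\mathbf G_v| \leq |\mathbf G_{f(v)}|$. Periodicity now enters: picking $t \geq 1$ with $f^t(v) = v$ and iterating this inequality along the orbit of $v$ produces
\[
|\mathbf G_v| \leq |\mathbf G_{f(v)}| \leq \cdots \leq |\mathbf G_{f^{t}(v)}| = |\mathbf G_v|,
\]
so equality holds at every step; in particular $|\mathbf G_{f(v)}| = |\mathbf G_v|$.

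With this equality in hand, the refining partition and the gate partition of $E_v$ both have exactly $|\mathbf G_v|$ parts, and so must coincide. Thus each gate $G$ at $v$ equals $Df^{-1}(G')$ for a unique gate $G'$ at $f(v)$, which is the same as saying $Df(G) \subseteq G'$. This provides the well-defined local gate-structure morphism $f_{\mathbf G_v}: \mathbf G_v \to \mathbf G_{f(v)}$ demanded by the statement, and the assignment $G' \mapsto Df^{-1}(G')$ is a two-sided inverse, so $f_{\mathbf G_v}$ is bijective. The ``illegal to illegal'' statement then drops out: if $(e_1,e_2)$ is illegal at $v$ with $e_1,e_2 \in G$, then $Df(e_1),Df(e_2) \in Df(G) \subseteq G'$, so $(Df(e_1),Df(e_2))$ is illegal at $f(v)$. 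The only real obstacle in this plan is recognizing that the train track hypothesis alone yields only the inequality on gate counts; it is precisely the periodicity of $v$ that lets one close up the orbit and promote this to the equality that forces the refinement of partitions to be an equality.
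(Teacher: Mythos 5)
Your proposal is correct and follows essentially the same route as the paper's proof: the train track property forces $Df$ to send distinct gates at $v$ into distinct gates at $f(v)$, giving $|\mathbf G_v|\le|\mathbf G_{f(v)}|$, and periodicity closes the orbit to force equality, which in turn forces $Df$ to map each gate into a single gate. You merely spell out in more detail (via the preimage partition) the steps the paper compresses into ``it follows directly.''
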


\begin{proof}
Since $f$ has the train track property, the induced map $D f$ maps
edges in distinct gates to edges in distinct gates.  Thus for every
vertex $v$ of $\Gamma$ the number of gates at $f(v)$ must be larger or
equal to the number of gates at $v$. Furthermore, if the two numbers
are equal, it follows directly that edges in any given gate must be
mapped by $D f$ to edges that also lie all in a common gate. This
implies directly the claimed statement for periodic vertices of a
train track self-morphism.
\end{proof}

\medskip

The most important special case of graph maps, and also the source of
the notion of ``gates'', is the case of a self-map $f: \Gamma \to
\Gamma$.  It turns out that self-maps which don't even have the train
track property define already a gate structure on $\Gamma$:

\begin{defn}
\label{def:intrinsic}
Let $f: \Gamma \to \Gamma$ be a graph self-map with no contracted
edges. The {\em intrinsic} gate structure $\mathbf G = \mathbf G(f)$
is defined by declaring edges $e, e'$ with same initial vertex to
belong to the same gate if and only if some power of $f$ maps $e$ and
$e'$ to edge paths which have non-trivial initial subpaths in common.
\end{defn}

\begin{rem}
\label{built-in}
(1) This definition has built in that $f$ preserves the gates of the
gate structure $\mathbf G(f)$:
\begin{enumerate}
\item[(a)] edges in distinct gates at a common vertex are mapped into distinct gates and,
\item[(b)] all edges in a given gate are mapped into the same gate.
\end{enumerate}
In other words: The graph self-map $f$ is a gate structure morphism
with respect to $\mathbf G(f)$, and the induced map $f_{\mathbf G(f)}$ is
injective.

\smallskip
\noindent
(2) Hence, by Remark~\ref{tt-morphisms-again}, in order to check
whether $f$ is a train track morphism with respect to $\mathbf G(f)$, we
only need to check that for every edge $e$ the edge path $f(e)$ is
legal.
\end{rem}

\begin{rem} 
\label{classic-tts}
(1) Recall that a {\em classical train track map} as introduced by
Bestvina and Handel~\cite{BH} is a graph self-map $f: \Gamma \to
\Gamma$ which has the property that for any edge $e$ and any integer
$t \geq 1$ the edge path $f^t(e)$ is reduced.

We have already noted in Remark~\ref{composition}~(b) that any train
track self-morphism $f: \Gamma \to \Gamma$, with respect to {\em any}
gate structure $\mathbf G$ on $\Gamma$, is such a classical train track
map.

Conversely, it follows directly from the above definitions that every
classical train track map is a train track morphism with respect to
the intrinsic gate structure $\mathbf G(f)$.

\smallskip
\noindent
(2) However, the reader should be warned that for any classical train
track map $f$ as above, in addition to $\mathbf G(f)$ there may well
be other gate structures $\mathbf G$, with respect to which $f$ is
also a train track morphism: For example every positive automorphism
is represented by a train track morphism on the rose with respect to
the gate structure at the sole vertex which consists only of the
``positive'' and of the ``negative'' gates.

\smallskip
\noindent
(3) On the other hand, it follows directly from the above definitions
that any other such gate structure $\mathbf G$ must be coarser than
$\mathbf G(f)$, so that $\mathbf G(f)$ is indeed the finest gate structure
with respect to which the train track map $f$ is a train track
morphism.
\end{rem}

\medskip

The following turns out to be a useful notion for the sequel. In order
to properly state it we recall from Lemma~\ref{periodic-gates} that
any train track self-map $f$ which fixes a vertex $v$ induces a well
defined bijection $f_{\mathbf G_v}$ on the set of gates at that vertex.

\begin{defn}
\label{def:gate-stable}
A train track morphism $f: \Gamma \to \Gamma$ with respect to some
gate structure $\mathbf G$ is called {\em gate-stable} if $f$ fixes
every vertex of $\Gamma$, and at every vertex $f$ fixes also every
gate of $\mathbf G$.
\end{defn}

We notice directly that every train track self-morphism which acts
periodically on every vertex possesses a positive power that is
gate-stable.

\medskip

We say that a path $\gamma$ {\em crosses over a gate turn $(\mathfrak g_i,
  \mathfrak g_j)$} if $\gamma$ contains the subpath $\bar e\cdot e'$
with $e \in \mathfrak g_i$ and $e' \in \mathfrak g_j$.

\begin{defn}
\label{gate-index}
  Let $f: \Gamma \to \Gamma$ be a train track morphism with respect to
  some gate structure $\mathbf G$ on $\Gamma$. 

\smallskip
\noindent
(a) For every vertex $v$ of $\Gamma$ we define the {\em
  gate-Whitehead-graph} $Wh_{\mathbf G}^v(f)$ to be the graph with the
set $\mathbf G_v$ of gates at $v$ as vertex set, and with a
(non-oriented) edge connecting $\mathfrak g_i$ to $\mathfrak g_j$ if
for some $t \geq 1$ and some edge $e \in E^\pm(\Gamma)$ the path
$f^t(e)$ crosses over $v$ entering through $\mathfrak g_i$ and leaving
through $\mathfrak g_j$.

\smallskip
\noindent
(b) A vertex $v$ in $\Gamma$ is {\em essential} if it is periodic and
if there are at least three gates at $v$. The {\em gate index} at $v$
is defined as
\[
\ind_\mathbf G(v) := \frac{g(v)}{2} - 1
\]
where $g(v)$ denotes the number of gates at $v$.

\smallskip
\noindent
(c) The {\em gate index list} for $f$ is the list of gate indices of
$\mathbf G(f)$ at essential vertices. We usually order such a list as
decreasing sequence of its values.
\end{defn}

For graph self-maps the Whitehead graphs at the vertices (in various
dialects) have been used previously (e.g. see
\cite{BH,HM,JL,KL,Lu_conj-iwips}).

\begin{prop}
\label{Wh-composition}
Let $f$ and $g$ be train track morphisms of a graph $\Gamma$ with
respect to the gate structure $\mathbf G := \mathbf G(f)$. If $g$
induces an automorphism of $\pi_1(\Gamma)$ and is gate-stable, then
for every vertex $v$ of $\Gamma$ the graph $Wh_{\mathbf G}^v(f)$ is a
subgraph of both, $Wh_{\mathbf G}^v(f\circ g)$ and
$Wh_{\mathbf G}^v(g\circ f)$.
\end{prop}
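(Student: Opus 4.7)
The plan is to prove, by induction on the minimal $t \geq 1$ for which some iterate $f^t(e)$ crosses over a given gate turn $\tau = (\mathfrak g_i, \mathfrak g_j)$ at a vertex $v$, that there exist $s \geq 1$ and an edge $e'$ with both $(g \circ f)^s(e')$ and $(f \circ g)^s(e')$ crossing $\tau$ at $v$. Two preliminary observations drive the argument. First, the gate-stability of $g$ together with the train track property implies that $g$ preserves every gate turn crossing: if a legal path $\gamma$ contains $\overline{e_1}\, e_2$ at $v$ with $e_1 \in \mathfrak g_i$ and $e_2 \in \mathfrak g_j$, then $g(\gamma)$ is legal and contains $\overline{g(e_1)}\, g(e_2)$, whose junction turn $(Dg(e_1), Dg(e_2))$ at $v$ still lies in $(\mathfrak g_i, \mathfrak g_j)$ because $g$ fixes each gate. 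An analogous but weaker statement for $f$, which by Remark~\ref{built-in} is merely a gate structure morphism with respect to $\mathbf G(f)$, reads: if a legal path $\mu$ crosses a gate turn $\tau_w$ at a vertex $w$ whose two gates are sent by $f_{\mathbf G}$ to $\mathfrak g_i, \mathfrak g_j$ and with $f(w) = v$, then $f(\mu)$ is legal and crosses $\tau$ at $v$. Second, $g$ is edge-surjective: since $g$ induces an automorphism of $\pi_1(\Gamma)$ and has no contracted edges, the connected subgraph $g(\Gamma) \subseteq \Gamma$ has $\pi_1$-rank $N$, and since a proper connected subgraph of a graph without valence-$1$ vertices has strictly smaller rank, this forces $g(\Gamma) = \Gamma$, so every edge of $\Gamma$ appears up to orientation in some $g(e')$.

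The base case $t = 1$ is then immediate. For $g \circ f$, take $e' = e$: the first observation applied to $f(e)$ gives that $(g \circ f)(e) = g(f(e))$ crosses $\tau$ at $v$. For $f \circ g$, use edge-surjectivity to choose $e'$ with $g(e')$ containing $e$ up to orientation; then $(f \circ g)(e') = f(g(e'))$ is legal and contains $f(e)$, hence crosses $\tau$ at $v$. For the inductive step $t \geq 2$, write $f^t(e) = f(f^{t-1}(e))$; the crossing of $\tau$ at $v$ must occur either (Case A) inside $f(e_0)$ for some edge $e_0$ of $f^{t-1}(e)$, which reduces to the base case applied to $e_0$, or (Case B) at the junction between $f(a_i)$ and $f(a_{i+1})$ for two consecutive edges $a_i, a_{i+1}$ of $f^{t-1}(e)$ meeting at a vertex $w$ with $f(w) = v$. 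In Case B, the legal turn $(\overline{a_i}, a_{i+1})$ at $w$ defines a gate turn $\tau_w$ whose two gates are mapped by $f_{\mathbf G}$ to $\mathfrak g_i, \mathfrak g_j$; since $f^{t-1}(e)$ crosses $\tau_w$ at $w$, the induction hypothesis applied at $t - 1$ yields $s'$ and $e''$ with $(g \circ f)^{s'}(e'')$ and $(f \circ g)^{s'}(e'')$ each crossing $\tau_w$ at $w$. One further composition step converts this into a $\tau$-crossing at $v$: for $g \circ f$, the inner $f$ performs the conversion via the weaker observation for $f$ and the outer $g$ preserves it via the gate-stable observation; for $f \circ g$ these two steps are applied in reverse order.

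The main obstacle I anticipate is precisely this conversion in Case B: one must carefully verify that applying $f$ to a legal path crossing $\tau_w$ at $w$ produces a crossing of $\tau$ at $v = f(w)$. This hinges on two points, namely that $f$ is a genuine gate structure morphism with respect to its intrinsic gate structure $\mathbf G(f)$ (Remark~\ref{built-in}), so that $f_{\mathbf G}$ is well-defined and identifies the two gates of $\tau_w$ with those of $\tau$; and that $g$ fixes both $w$ and every gate of $\tau_w$ by gate-stability, so that inserting $g$-steps during the induction neither creates illegalities nor displaces the crossing.
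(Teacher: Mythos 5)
Your proof is correct and takes essentially the same approach as the paper: it rests on the identical three ingredients (edge-surjectivity of $g$, deduced from the homotopy-equivalence and no-valence-one hypotheses; gate-stability of $g$ making $D^2_{\mathbf G}\,g$ the identity on gate turns; and $f$ being a gate structure morphism with respect to $\mathbf G(f)$). Your induction on $t$ with the Case A/Case B split is just an unrolled version of the paper's closed-form characterization of the edges of $Wh_{\mathbf G}^v(f)$ as direct crossings by some $f(e)$ together with their images under $D^2_{\mathbf G}\,f^t$.
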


\begin{proof}
  We first use the hypothesis $\mathbf G = \mathbf G(f)$ to deduce
  (see Remark~\ref{built-in} (1)) that $f$ is a gate structure
  morphism and hence induces a map
  $f_\mathbf G: \mathbf G \to\mathbf G$. Hence there is a well defined
  map $D^2_{\mathbf G} f$ on the gate turns of $\Gamma$, given by
  $D^2_{\mathbf G} f(\mathfrak g_i, \mathfrak g_j) := (f_\mathbf
  G(\mathfrak g_i), f_\mathbf G(\mathfrak g_j))$.

  For any vertex $v$ of $\Gamma$ it follows from the definition of the
  gate-Whitehead-graph that in $Wh_{\mathbf G}^v(f)$ two ``vertices''
  $\mathfrak g_i$ and $\mathfrak g_j$ are connected by an edge if and
  only if one of the following occurs:

  (a) For some edge $e \in E^\pm(\Gamma)$ the path $f(e)$ crosses over
  the gate turn $(\mathfrak g_i, \mathfrak g_j)$.

  (b) For some $t \geq 1$ the map $D^2_{\mathbf G} f^t$ maps one of
  the gate turns crossed over by some $f(e)$ to the gate turn
  $(\mathfrak g_i, \mathfrak g_j)$.

  From the hypothesis that $g$ is a homotopy equivalence and our
  convention that graphs don't have valence 1 vertices we deduce that
  each edge $e$ appears in the image $g(e')$ of some edge $e'$. It
  follows that all gate turns $(\mathfrak g_i, \mathfrak g_j)$ crossed
  over by the path $f(e)$ are also crossed over by the path
  $f\circ g(e')$.  Since $g$ is gate-stable, the maps $g_\mathbf G$
  and $D^2_{\mathbf G} g$ are well defined, and $D^2_{\mathbf G} g$
  acts as the identity on the set of gate turns.  This implies both,
  that $g\circ f(e)$ crosses over the same gate turns as $f(e)$, and
  that the above property (b) for $f$ is equivalent to property (b)
  for $f\circ g$ or $g\circ f$.

  This shows that, for every vertex $v$ of $\Gamma$, in the graph
  $Wh_{\mathbf G}^v(f)$ two ``vertices'' $\mathfrak g_i$ and
  $\mathfrak g_j$ are connected by an edge only if the same
  ``vertices'' are also connected by an edge in
  $Wh_{\mathbf G}^v(f \circ g)$ and in $Wh_{\mathbf G}^v(g \circ f)$.
\end{proof}

\section{In the absence of INPs}

The notion of an INP is a classical concept, going back to \cite{BH}.
Origin\-ally, ``INP'' was an abbreviation for ``indivisible Nielsen
path''; however, through frequent use it has become mainly an acronym,
and as such we treat it here:

\begin{defn}
  Let $f: \Gamma \to \Gamma$ be a train track morphism with respect to
  some gate structure $\mathbf G$ on $\Gamma$. A reduced path $\eta$
  in $\Gamma$ is called a {\em periodic INP} if
  $\eta = \gamma^{-1} \circ \gamma'$, where the two {\em branches}
  $\gamma$ and $\gamma'$ are non-trvial legal paths, and $f^t(\eta)$
  is homotopic relative endpoints to $\eta$, for some $t \geq 1$.
\end{defn}

The reader should be alerted that the endpoints of a periodic INP
$\eta$ may well not be vertices of $\Gamma$ (although this can be
readily achieved by subdividing edges and their iterated $f$-images,
which is a finite procedure since the endpoints of $\eta$ are by
definition $f$-periodic).

\smallskip


The importance of the gate-Whitehead-graph (see
Definition~\ref{gate-index}) for a train track map is underlined by
the following ``irreducibility criterion'' from \cite{JL}. We quote
here only a simplified version which is used below in
Section~\ref{legalizing-maps}:

\begin{prop}[{\cite[Proposition 5.1]{JL}}]
\label{JL-quote}
Let $f: \Gamma \to \Gamma$ be a train track representative of $\phi
\in \Out(\FN)$. Assume furthermore:
\begin{enumerate}
\item\label{cond:primitive} The transition matrix $M(f)$ is primitive.
\item\label{cond:gwg-connected} The gate-Whitehead-graph $Wh_{\mathbf G(f)}^v(f)$ for $f$ at
  every vertex $v$ of $\Gamma$ is connected.
\item\label{cond:no-inp} There is no periodic INP for $f$ in $\Gamma$.
\end{enumerate}
Then $\phi$ is iwip (= fully irreducible).
\qed
\end{prop}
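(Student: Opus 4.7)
The plan is to argue by contradiction. Suppose $\phi$ is not iwip, so some positive power $\phi^k$ fixes up to conjugacy a proper free factor $A \leq \FN$. Since primitivity of $M(f)$, connectedness of the gate-Whitehead-graphs and absence of periodic INPs are all inherited by positive powers of $f$ on the same graph $\Gamma$, I may replace $f$ by $f^k$ and assume from the outset that $[A]$ itself is $\phi$-invariant.

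The first step is to use hypotheses~(\ref{cond:primitive}) and~(\ref{cond:gwg-connected}) to produce and control the attracting lamination of $\phi$ coming from $f$. Primitivity of $M(f)$ gives a Perron--Frobenius eigenvalue $\lambda > 1$ together with an $f$-equivariant metric on $\Gamma$ stretched uniformly by $\lambda$, and for every edge $e$ the iterates $f^n(e)$ are legal by Remark~\ref{composition}~(b); suitably rescaled they converge in $\tilde\Gamma$ to biinfinite legal paths whose $\FN$-orbits assemble into a $\phi$-invariant lamination $\Lambda_f$ on $\Gamma$. Connectedness of $Wh_{\mathbf G(f)}^v(f)$ at each vertex $v$ means that any two gates at $v$ can be joined by a sequence of gate-turns each of which occurs in some iterate $f^t(e)$; combined with primitivity this yields, by a bootstrapping argument, that any two finite legal paths in $\Gamma$ appear as subpaths of a common iterate $f^n(e)$ for $n$ large. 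Hence every leaf of $\Lambda_f$ is dense, so $\Lambda_f$ admits no proper sublamination and is not carried by any proper subgraph of $\Gamma$.

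For the final step I would invoke the Bestvina--Handel reduction principle for train tracks. If $\phi$ preserves the proper free factor $[A]$, this reduction produces either (i) a proper $f$-invariant subgraph of $\Gamma$ carrying the conjugacy classes in $A$, or (ii) an indivisible periodic Nielsen path $\eta = \gamma^{-1}\circ\gamma'$ with legal branches $\gamma, \gamma'$ along which $A$ is glued into the model and which witnesses the reduction. Alternative~(i) is ruled out by the minimality established in the previous step (equivalently by primitivity of $M(f)$, which forces every edge eventually to traverse every other). Alternative~(ii) is ruled out by hypothesis~(\ref{cond:no-inp}), since such an indivisible periodic Nielsen path is by definition a periodic INP. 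Both alternatives being excluded, $\phi$ must be iwip.

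The main obstacle I anticipate is the last step: translating the purely algebraic condition ``$\phi$ fixes the conjugacy class of a proper free factor'' into a combinatorial reduction visible on $\Gamma$. This is precisely the content of the Bestvina--Handel reduction procedure, and the dichotomy between an invariant proper subgraph and a periodic INP is its essential output (cf.~\cite{BH}); the first two steps amount to fairly standard lamination bookkeeping built on top of the gate-structure machinery already developed in Section~\ref{sec:gates}.
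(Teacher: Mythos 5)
First, a point of comparison: the paper does not prove this proposition at all --- it is quoted verbatim from \cite{JL} with a \qed, and the only proof sketch in the vicinity (Remark~\ref{proof-of-partial-converse}) concerns the \emph{partial converse}, Proposition~\ref{prop:no-iwp-implies-irred}. So your proposal has to stand on its own, and it does not: the decisive third step rests on a ``Bestvina--Handel reduction principle'' that is not a theorem of \cite{BH}. What \cite{BH} provides is the construction of (relative) train track representatives on \emph{some} graph; it does not assert that a $\phi$-invariant proper free factor is witnessed \emph{on the given} train track representative $f:\Gamma\to\Gamma$ either by an invariant proper subgraph or by a periodic INP. That dichotomy is essentially the statement being proved, so invoking it as a black box is circular. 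Moreover the dichotomy as you state it is missing a case: an invariant free factor may be carried neither by a subgraph of $\Gamma$ nor by a pINP, but only by a subgraph of a \emph{blow-up} of $\Gamma$ at a vertex whose gate-Whitehead-graph is disconnected (the ``invisible edges'' mechanism described in Remark~\ref{proof-of-partial-converse}). Excluding precisely this case is the entire job of hypothesis~(\ref{cond:gwg-connected}); in your argument that hypothesis is never genuinely used at the crucial step, and if your two-case dichotomy were correct, conditions~(\ref{cond:primitive}) and~(\ref{cond:no-inp}) alone would imply iwip, which is false.

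A secondary inaccuracy: connectedness of the gate-Whitehead-graphs together with primitivity does \emph{not} imply that any two finite legal paths occur as subpaths of a common $f^n(e)$. A legal path may cross legal turns that are never taken by any iterated edge image; only subpaths of leaves of the attracting lamination are realized in iterates. (Primitivity alone already gives that every $f^m(e')$ occurs in every sufficiently high $f^n(e)$, which is all the minimality you need there.) To repair the proof you would have to supply the actual argument of \cite{JL} or \cite{Ka2013}: take the Stallings core immersion $\Delta_A\looparrowright\Gamma$ of the invariant free factor $A$, iterate $f$ on it, use the absence of periodic INPs to make the images of paths in $\Delta_A$ legal, use primitivity to force surjectivity on edges, and use connectedness of the gate-Whitehead-graphs to force local surjectivity at the gates of each vertex, whence the immersion is a covering and $A$ has finite index --- contradicting that $A$ is a proper free factor. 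That argument is the content you have outsourced.
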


A ``partial converse'' of this result is given by the following:

\begin{prop}[{\cite[Proposition~5.1]{JL}}]
\label{prop:no-iwp-implies-irred}
  Let $f:\Gamma\to\Gamma$ be a train track representative of some iwip
  automorphism $\phi\in\Out(\FN)$.  Assume that $f$ is expanding and
  that there is no periodic INP for $f$. 
  
  Then $f$ must satisfy conditions~(\ref{cond:primitive}) and
  (\ref{cond:gwg-connected}) from Proposition~\ref{JL-quote}.  \qed
\end{prop}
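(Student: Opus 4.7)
The plan is to prove both (\ref{cond:primitive}) and (\ref{cond:gwg-connected}) by contraposition: if either fails for $f$, we exhibit a proper non-trivial free factor of $\FN$ invariant under a positive power of $\phi$, contradicting the iwip hypothesis.

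For (\ref{cond:primitive}), suppose $M(f)$ is not primitive. If $M(f)$ is reducible, then some proper non-empty $S\subset E^+(\Gamma)$ is forward-invariant under $f$, and the subgraph $\Gamma_S\subset\Gamma$ spanned by $S\cup\bar S$ and their endpoints is $f$-invariant. The expanding hypothesis excludes $\Gamma_S$ being a disjoint union of finite trees (since reduced paths in a finite tree have bounded length), so some component of $\Gamma_S$ carries a non-trivial free factor $H\subsetneq\FN$ via the standard spanning-tree construction; a positive power of $\phi$ then preserves $H$, contradicting iwip. If $M(f)$ is irreducible but has period $d\geq 2$, the cyclic decomposition of $E^+(\Gamma)$ yields $f^d$-invariant proper subsets of edges; applying the reducible case to $f^d$, which represents the iwip automorphism $\phi^d$, again produces a contradiction.

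For (\ref{cond:gwg-connected}), suppose $Wh_{\mathbf G(f)}^v(f)$ is disconnected at some vertex $v$, and choose a partition of the gates at $v$ into non-empty subsets $A\sqcup B$ induced by a non-trivial separation of $Wh_{\mathbf G(f)}^v(f)$. By Lemma~\ref{periodic-gates}, $f_\mathbf G$ is a bijection on gates at periodic vertices, so after replacing $f$ by a positive power we may assume $f$ fixes $v$ and preserves both $A$ and $B$ setwise. Blow up $v$ to an edge $e_0$ joining two new vertices $v_A,v_B$, attaching each edge at $v$ to $v_A$ or $v_B$ according as its germ lies in a gate of $A$ or $B$; let $\Gamma'$ be the resulting graph. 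Collapsing $e_0$ defines a homotopy equivalence $p\colon\Gamma'\to\Gamma$ identifying $\pi_1(\Gamma')$ with $\FN$. The separation ensures that no $f(e)$ uses a turn between $A$ and $B$ at $v$, so $f$ lifts uniquely to a graph map $f'\colon\Gamma'\to\Gamma'$ with $p\circ f'=f\circ p$ and $f'(e_0)=e_0$; thus $f'$ represents $\phi$. The $f'$-fixed edge $e_0$ then supplies an invariant free splitting of $\FN$: if $e_0$ separates $\Gamma'$, we get a free product $\FN=\pi_1(\Gamma'_A)\ast\pi_1(\Gamma'_B)$ with each factor $\phi$-invariant, otherwise $\pi_1(\Gamma'\smallsetminus\mathring{e}_0)$ is a $\phi$-invariant proper free factor of rank $N-1$. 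Either case contradicts iwip.

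The main obstacle is the control of degenerate sub-cases of the blow-up, in particular when one of $\Gamma'_A$ or $\Gamma'_B$ collapses to a finite tree and does not directly supply a proper free factor; here one re-invokes the expanding hypothesis exactly as in the primitivity step to rule out an $f'$-invariant finite tree. The no-periodic-INP assumption enters crucially to guarantee that the lifted map $f'$ is a bona fide train track morphism representing $\phi$ with no periodic Nielsen path appearing along the new edge $e_0$, and that the resulting invariant splitting of $\FN$ is a genuine free factor decomposition and not merely an artifact of Nielsen-path structure that could be absorbed away without violating iwip.
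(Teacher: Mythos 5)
Your argument is, in substance, the direct proof that the authors themselves sketch in Remark~\ref{proof-of-partial-converse} (in the body of the paper the proposition is simply quoted from \cite{JL}): argue by contraposition, with non-primitivity of $M(f)$ yielding an $f^t$-invariant proper subgraph that expansion prevents from being a forest, and a disconnected gate-Whitehead-graph yielding a vertex blow-up whose new edge $e_0$ is ``invisible'', so that its complement is an invariant subgraph carrying a $\phi$-periodic proper free factor system. The primitivity half and the overall strategy are fine. There is, however, one genuine gap in your treatment of condition~(\ref{cond:gwg-connected}): that condition requires connectivity at \emph{every} vertex, but your blow-up is only set up for \emph{periodic} vertices, since you invoke Lemma~\ref{periodic-gates} and pass to a power of $f$ that fixes $v$ in order to arrange $f'(e_0)=e_0$; no power of $f$ fixes a non-periodic vertex. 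The repair is to notice that you never need $f'(e_0)=e_0$ at all: the defining property of the partition $A\sqcup B$ into unions of components of $Wh_{\mathbf G(f)}^v(f)$ is precisely that no iterated edge image crosses $v$ from an $A$-gate to a $B$-gate, so every $f'(e)$ with $e\neq e_0$ already avoids $e_0$, and $\Gamma'\smallsetminus\mathring{e}_0$ is $f'$-invariant whatever $f'(e_0)$ is declared to be; its components then give the invariant proper free factor system in all cases. (Also, the degenerate case you worry about --- one side of a separating $e_0$ being a finite tree --- is excluded directly by the no-valence-one convention, since such a tree would have a leaf of valence $1$ in $\Gamma$; the same convention is what guarantees, in your primitivity step, that the invariant component is a \emph{proper} factor and not all of $\FN$, a point you assert without justification.)

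Your closing claim that the no-periodic-INP hypothesis ``enters crucially'' to make $f'$ a train track morphism and the splitting a genuine free factor decomposition is unfounded: fundamental groups of subgraph components are always honest free factors, no train track property of $f'$ is needed anywhere, and in fact neither half of the argument uses the absence of INPs. This does not damage the proof, but it misattributes the role of a hypothesis and should be deleted.
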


\begin{rem}
\label{proof-of-partial-converse}
Since the connectedness of the gate-Whitehead-graph for $f$ at every
vertex is a direct consequence of the connectedness of the classical
Whitehead graph of $f$ at $v$, one can obtain
Proposition~\ref{prop:no-iwp-implies-irred} also as direct consequence
of the classical known irreducibility criterion (see \cite{Ka2013}).

Alternatively, the reader may prefer to go for a direct proof of
Proposition~\ref{prop:no-iwp-implies-irred} according to the following
lines: If $M(f)$ is not primitive, then the transition matrix $M(f^t)$
of a positive power of $f$ must be reducible, so that there is an
$f^t$-invariant subgraph of $\Gamma$, which by expansiveness of $f$
must have as fundamental group a proper free factor of $\FN$.

Similarly, if $Wh_{\mathbf G(f)}^v(f)$ is not connected for some
vertex $v$ of $\Gamma$, then we can introduce ``invisible edges'' to
blow up that vertex and thus again find, as complement of the
invisible edges, an invariant subgraph with a proper free factor of
$\FN$ as fundamental group.  (More details regarding the blow-up
technology of vertices by means of invisible edges can be found in the
proof of \cite[Proposition~5.1]{JL}, or in the description before
Lemma~6.6 of \cite{KL}.)
\end{rem}

\section{Long turns}
\label{long-turns}

Recall that every graph map $f:\Gamma \to \Gamma'$ which induces an
isomorphism on $\pi_1 \Gamma$ (i.e. $f$ is a homotopy equivalence)
possesses a cancellation bound which can be expressed topologically as
a {\em bounded backtracking constant} of any lift
$\tilde f:\tilde \Gamma \to \tilde \Gamma'$ to the universal coverings
(see \cite{GJLL}): There exists a constant $C(f) \geq 0$ such that for
any reduced edge path $\gamma$ in $\tilde \Gamma$ the unreduced image
path $\tilde f(\gamma)$ is contained in the $C(f)$-neighborhood of the
reduced path $[\tilde f(\gamma)]$, for the metric on $\tilde \Gamma'$
defined by combinatorial path length (see \S\ref{preliminaries}).  It
is known (see \cite{BFH,GJLL}) for homotopy equivalences $f$ that the
{\em combinatorial volume} of $f(\Gamma)$, i.e. the total length of
the edge paths $f(e_i)$ for all $e_i \in {\rm E}^+(\Gamma)$, is such a
cancellation bound.

\begin{defn}
\label{longturns}
Let $\Gamma$ and $\Gamma'$ be graphs equipped with gate structures,
and let $f: \Gamma \to \Gamma'$ be a train track morphism.

\smallskip
\noindent
(1) A {\em long turn} $(\gamma,\gamma')$ in $\Gamma$ is given by two
non-trivial legal edge paths $\gamma$ and $\gamma'$ with common
initial vertex of $\Gamma$ and distinct first edges.

\smallskip
\noindent
(2) A long turn $(\gamma,\gamma')$ is called {\em illegal} if the
initial edges of $\gamma$ and $\gamma'$ form an illegal turn (in the
sense of Definition~\ref{def:gates} (3)). Otherwise $(\gamma,\gamma')$
is called {\em legal}.

\smallskip
\noindent
(3) A long turn $(\gamma_1, \gamma'_1)$ in $\Gamma'$ is called {\em
  the $f$-image} of the long turn $(\gamma, \gamma')$ in $\Gamma$,
denoted by
\[
f^{LT}(\gamma,\gamma') := (\gamma_1,\gamma'_1)\, ,
\]
if $f(\gamma) = \gamma_0 \circ \gamma_1$ and $f(\gamma') = \gamma'_0
\circ \gamma'_1$ such that $\gamma_0 = \gamma'_0$ is the maximal
common initial subpath of $f(\gamma)$ and $f(\gamma')$, and $\gamma_1$
and $\gamma'_1$ are non-trivial paths.

A long turn which possesses an $f$-image is called an {\em $f$-long
  turn}. Otherwise we call it {\em $f$-degenerate}.
\end{defn}

Note that a turn $(e, e')$ in the classical sense of
Definition~\ref{def:gates} (3) is in particular a long turn, except in
the particular case where it is {\em degenerate}, i.e. $e = e'$.

\begin{rem}
\label{short-long-turns}
We note that the existence of a cancellation bound $C(f)$ for a train
track morphism $f: \Gamma \to \Gamma'$ together with the ``no
contracted edges'' assumption in Definition~\ref{train-track-morphism}
imply that every long turn $(\gamma, \gamma')$ with $|\gamma|,
|\gamma'| \geq C(f) +1$ is $f$-long.
\end{rem}

We now want to consider the composition $g \circ f$ of two train
track morphisms $f:\Gamma \to \Gamma'$ and $g:\Gamma' \to \Gamma''$. We observe:

\smallskip
\noindent
(1)
If a long turn $(\gamma_1, \gamma_2)$ in $\Gamma$ is $f$-degenerate,
then $f(\gamma_1)$ is an initial subpath of $f(\gamma_2)$ (or
conversely), and hence, since $\gamma_1, \gamma_2$ as well as their
images are legal, $g\circ f(\gamma_1)$ is an initial subpath of $g\circ f(\gamma_2)$
(or conversely), so that $(\gamma_1, \gamma_2)$ is also $(g\circ
f)$-degenerate.

\smallskip
\noindent
(2) Similarly, if $(\gamma_1, \gamma_2)$ is $f$-long, but its
$f$-image long turn $(\gamma'_1, \gamma'_2)$ is $g$-degenerate, it
follows that $(\gamma_1, \gamma_2)$ is also $(g\circ f)$-degenerate.

\smallskip
\noindent
(3) On the other hand, if $(\gamma_1, \gamma_2)$ is $f$-long and its
$f$-image long turn $(\gamma'_1, \gamma'_2)$ is $g$-long, then we see
directly that $(\gamma_1, \gamma_2)$ is also $(g\circ f)$-long, and
that its $(g\circ f)$-image coincides with the $g$-image of
$(\gamma'_1, \gamma'_2)$. 

We summarize:

\begin{lem}
\label{product-map}
Let $f:\Gamma \to \Gamma'$ and $g:\Gamma' \to \Gamma''$ be two train
track morphisms (with respect to the same gate structure on
$\Gamma'$).

A long turn $(\gamma_1, \gamma_2)$ in $\Gamma$ is $(g\circ f)$-long if
and only if $(\gamma_1, \gamma_2)$ is $f$-long and its $f$-image long
turn $f^{LT}(\gamma_1, \gamma_2)$ is $g$-long. In this case one has:
\[
(g\circ f)^{LT}(\gamma_1, \gamma_2) = g^{LT}(f^{LT}(\gamma_1, \gamma_2))
\]
\qed
\end{lem}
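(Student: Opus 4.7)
The plan is to organize and expand the three observations (1)--(3) immediately preceding the lemma, since they already contain all the essential information. The notion of long turn and its image depends only on comparing maximal common initial subpaths of legal image edge paths; everything follows by careful bookkeeping of such common prefixes, once one knows that legal paths are reduced and that $g\circ f$ is itself a train track morphism (Remark~\ref{composition}(a)), so that $(g\circ f)$-images of long turns are well-defined in the same way.

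For the ``only if'' direction, I would argue the contrapositive. If $(\gamma_1,\gamma_2)$ is $f$-degenerate, then $f(\gamma_1)$ is an initial subpath of $f(\gamma_2)$ or vice versa. Since $\gamma_i$ is legal and $f$ is a train track morphism, $f(\gamma_i)$ is legal, and then $g\circ f(\gamma_i)$ is legal hence reduced; applying $g$ to an initial-subpath relation preserves it, so $g\circ f(\gamma_1)$ is an initial subpath of $g\circ f(\gamma_2)$ (or vice versa), i.e.\ $(\gamma_1,\gamma_2)$ is $(g\circ f)$-degenerate. If instead $(\gamma_1,\gamma_2)$ is $f$-long with $f$-image $(\gamma'_1,\gamma'_2)$ that is $g$-degenerate, write $f(\gamma_i)=\gamma_0\circ\gamma'_i$ with $\gamma_0$ the maximal common prefix; then $g\circ f(\gamma_i)=g(\gamma_0)\circ g(\gamma'_i)$, and $g(\gamma'_1)$ being an initial subpath of $g(\gamma'_2)$ (or vice versa) immediately yields that $g\circ f(\gamma_1)$ is an initial subpath of $g\circ f(\gamma_2)$ (or vice versa). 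Again $(\gamma_1,\gamma_2)$ is $(g\circ f)$-degenerate.

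For the ``if'' direction together with the displayed formula, suppose $(\gamma_1,\gamma_2)$ is $f$-long with $f$-image $(\gamma'_1,\gamma'_2)$, and the latter is $g$-long with $g$-image $(\gamma''_1,\gamma''_2)$. Write $f(\gamma_i)=\gamma_0\circ\gamma'_i$ and $g(\gamma'_i)=\delta_0\circ\gamma''_i$ with $\gamma_0$ and $\delta_0$ the respective maximal common initial subpaths. Then
\[
g\circ f(\gamma_i)\;=\;g(\gamma_0)\circ g(\gamma'_i)\;=\;g(\gamma_0)\circ\delta_0\circ\gamma''_i,
\]
so $g(\gamma_0)\circ\delta_0$ is a common initial subpath of $g\circ f(\gamma_1)$ and $g\circ f(\gamma_2)$. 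The crucial point is its maximality: each $g\circ f(\gamma_i)$ is legal, hence reduced, so no further cancellation can hide agreement beyond what is visible. The next edge after $g(\gamma_0)\circ\delta_0$ in $g\circ f(\gamma_i)$ is exactly the first edge of $\gamma''_i$, and the two paths $\gamma''_1,\gamma''_2$ have distinct first edges by definition of a long turn. Therefore $g\circ f(\gamma_1)$ and $g\circ f(\gamma_2)$ already disagree one edge past $g(\gamma_0)\circ\delta_0$, which simultaneously shows that $(\gamma_1,\gamma_2)$ is $(g\circ f)$-long and that $(g\circ f)^{LT}(\gamma_1,\gamma_2)=(\gamma''_1,\gamma''_2)=g^{LT}(f^{LT}(\gamma_1,\gamma_2))$.

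The only (mild) obstacle is guarding against unseen cancellation when concatenating $g$-images: this is precisely what is ruled out by the train track property together with the no-contracted-edges hypothesis, which together ensure that legal paths stay reduced under $f$, $g$, and $g\circ f$. Modulo this observation, the proof is a direct unwinding of Definition~\ref{longturns}(3).
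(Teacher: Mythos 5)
Your proof is correct and takes essentially the same route as the paper: the lemma is proved there precisely by the three observations (1)--(3) stated just before it, and your write-up is a careful expansion of those observations. The one detail you add explicitly --- that the common prefix $g(\gamma_0)\circ\delta_0$ is maximal because the first edges of $\gamma''_1$ and $\gamma''_2$ are distinct and the images are legal, hence reduced --- is exactly the point the paper leaves implicit, so nothing further is needed.
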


\begin{rem}
\label{legal-long}
If a long turn $(\gamma, \gamma')$ in $\Gamma$ is legal, then it is
$f$-long for every train track morphism $f: \Gamma \to \Gamma'$, and
the $f$-image is again legal (indeed, the common initial subpath
$\gamma_0 = \gamma'_0$ from part (3) of Definition~\ref{longturns} is
in this case trivial).

On the other hand, for an illegal $f$-long turn the $f$-image can be
either legal or illegal; both cases do occur.
\end{rem}

\begin{rem}
\label{subturns}
(1) Any long turn $(\gamma'_1, \gamma'_2)$ in $\Gamma$ is called a
{\em subturn} of a long turn $(\gamma_1, \gamma_2)$ if each
$\gamma'_i$ is an initial subpath of $\gamma_i$, for $i=1,2$. If both
long turns, $(\gamma_1, \gamma_2)$ and $(\gamma'_1, \gamma'_2)$, are
$f$-long for some train track map $f$, then their $f$-images are
either both legal or both illegal. Hence, in order to test legality of
the $f$-image of any long turn, it suffices to calculate the $f$-image
of the shortest $f$-long subturn of the given long turn.

\smallskip
\noindent
(2) If the {\em length} of a long turn $(\gamma_1, \gamma_2)$, defined
as $\min\{|\gamma_1|, |\gamma_2|\}$, satisfies
$\min\{|\gamma_1|, |\gamma_2|\} \geq C \geq 0$, then we will denote
below the subturn $(\gamma'_1, \gamma'_2)$ of $(\gamma_1, \gamma_2)$
with $|\gamma'_1| = |\gamma'_2|= C$ by:
\[
(\gamma_1, \gamma_2)\chop^C := (\gamma'_1, \gamma'_2)
\]
We also denote the set of all long turns $(\gamma_1, \gamma_2)$ in
$\Gamma$ with {\em branch length} $|\gamma_1| = |\gamma_2|= C$ by
$LT_C(\Gamma)$.
\end{rem}

\begin{defn}
\label{legalizing}
A train track morphism $f: \Gamma \to\Gamma'$ is called {\em
  legalizing} if every illegal $f$-long turn has legal $f$-image.
\end{defn}

It follows from Remarks~\ref{short-long-turns} and \ref{subturns} that
a train track map $f$ with cancellation bound $C(f)$ is legalizing if
and only if every long turn built from legal paths of length $C(f) +1$
has legal $f$-image. Indeed, it follows from
Corollary~\ref{INP-alternative} below that many (or even ``most'')
train track representatives of iwip automorphisms are legalizing.

\begin{rem}
\label{warning}
The reader should be warned that in the (frequently occurring) case
that a graph map $f: \Gamma \to \Gamma'$ is a train track morphism
with respect to a gate structure $\mathbf G$ on $\Gamma$ and
simultaneously with respect to a coarser gate structure $\mathbf G_0$,
then $f$ may well be legalizing with respect to $\mathbf G$ but not
with respect to $\mathbf G_0$.
\end{rem}

We should perhaps point out here that although every legalizing train
track morphism maps every path $\gamma$ with a single illegal turn to
a legal path $[f(\gamma)]$ (after reduction!), the same does not at
all follow for a path $\gamma$ with more than one illegal turn.  The
only conclusion one can draw is that the number of illegal turns in
the reduced image path $[f(\gamma)]$ is at most half times the number
of illegal turns in $\gamma$ plus $1$.

This notion of a ``legalizing map'' is robust and easy to handle, as
is illustrated by following:

\begin{prop}
\label{composition-legalizing}
Let $f: \Gamma_1 \to \Gamma_2$ and $g: \Gamma_2 \to \Gamma_3$ be two
train track morphisms. If either $f$ or $g$ is legalizing, then the
composition $g \circ f$ is a legalizing train track morphism.
\end{prop}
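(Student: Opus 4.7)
The plan is to combine Lemma~\ref{product-map} (which describes when a long turn is $(g\circ f)$-long and how its image factors) with Remark~\ref{legal-long} (which says legal long turns stay legal under any train track morphism), and then split into two cases based on which of $f, g$ is legalizing.

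First I would note that $g \circ f$ is a train track morphism by Remark~\ref{composition}~(a), so the only content is the legalizing property. Let $(\gamma_1, \gamma_2)$ be an arbitrary illegal $(g\circ f)$-long turn in $\Gamma_1$. By Lemma~\ref{product-map}, this forces $(\gamma_1, \gamma_2)$ to be $f$-long, the $f$-image $f^{LT}(\gamma_1, \gamma_2)$ to be $g$-long, and
\[
(g\circ f)^{LT}(\gamma_1, \gamma_2) \;=\; g^{LT}\bigl(f^{LT}(\gamma_1, \gamma_2)\bigr).
\]
So the question reduces to showing that the right-hand side is legal.

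If $f$ is legalizing, then because $(\gamma_1, \gamma_2)$ is illegal and $f$-long, its $f$-image $f^{LT}(\gamma_1, \gamma_2)$ is legal by definition of ``legalizing''. But then by Remark~\ref{legal-long}, applying $g$ yields a legal $g$-image, so $(g\circ f)^{LT}(\gamma_1, \gamma_2)$ is legal, as required.

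If instead $g$ is legalizing, I would distinguish two sub-cases according to the legality of $f^{LT}(\gamma_1, \gamma_2)$. If it is legal, then Remark~\ref{legal-long} applied to $g$ directly gives that its $g$-image is legal. If $f^{LT}(\gamma_1, \gamma_2)$ is illegal, then since it is also $g$-long (as already noted), the legalizing hypothesis on $g$ guarantees that $g^{LT}(f^{LT}(\gamma_1, \gamma_2))$ is legal. Either way, $(g\circ f)^{LT}(\gamma_1, \gamma_2)$ is legal, completing the verification.

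I do not foresee any real obstacle here: the compositional behavior of long turns is precisely packaged in Lemma~\ref{product-map}, and the persistence of legality under train track morphisms is exactly Remark~\ref{legal-long}. The only point that requires care is remembering that in Case~2 one must check both that $f^{LT}(\gamma_1, \gamma_2)$ is $g$-long (which is automatic from $(g\circ f)$-longness of $(\gamma_1, \gamma_2)$ by Lemma~\ref{product-map}) and that its legality status can go either way, so the sub-case split is genuinely needed rather than merely a formality.
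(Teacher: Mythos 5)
Your proposal is correct and follows exactly the paper's route: the paper's proof is a one-line appeal to Lemma~\ref{product-map} together with Remark~\ref{legal-long}, and your write-up simply fills in the case analysis those two facts implicitly encode. No issues.
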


\begin{proof}
  This follows directly from Lemma~\ref{product-map}, together with
  the observation in Remark~\ref{legal-long} that for any train track
  morphism and any legal long turn the image long turn is always
  legal.
\end{proof}

The following observation will be used crucially in the next section:

\begin{lem}
\label{lem:legalizing-implies-intrinsec}
(a) Let $g: \Gamma \to \Gamma$ be a train track morphism with respect
to some gate structure $\mathbf G$, and assume that $g$ is both,
legalizing and gate-stable. Then the gate structure $\mathbf G$ is
equal to the intrinsic gate structure of $g$:
\[
\mathbf G=\mathbf G(g)
\]

\smallskip
\noindent
(b) Moreover, if $f: \Gamma \to \Gamma$ is another train track
morphism with respect to $\mathbf G$, then the intrinsic gate
structure $\mathbf G(f \circ g)$ of the composition $f\circ g$ is
equal $\mathbf G$.
\end{lem}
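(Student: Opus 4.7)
The plan is to prove parts (a) and (b) by the same bilateral refinement argument: show that $\mathbf G$ and $\mathbf G(g)$ (respectively $\mathbf G$ and $\mathbf G(f\circ g)$) are each finer than the other, and conclude equality.

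For part (a), I would begin with the easier direction, that $\mathbf G(g)$ is finer than $\mathbf G$. Given edges $e, e'$ at a common vertex lying in distinct gates of $\mathbf G$, the train track property of $g$ forces the turn $(Dg(e), Dg(e'))$ to be legal and in particular $Dg(e) \neq Dg(e')$; iterating, $g^t(e)$ and $g^t(e')$ have different initial edges for every $t\geq 1$, so they share no non-trivial initial subpath, and $e, e'$ lie in distinct gates of $\mathbf G(g)$. For the harder direction, suppose $e, e'$ lie in the same gate $\mathfrak g$ of $\mathbf G$. Gate-stability of $g$ places both $Dg(e)$ and $Dg(e')$ again in $\mathfrak g$. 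It suffices to show that $g(e)$ and $g(e')$ share a non-trivial common initial subpath, for then $e, e'$ automatically lie in the same gate of $\mathbf G(g)$ by definition (taking $t=1$). If $(e, e')$ is $g$-degenerate this is immediate, since one of $g(e), g(e')$ is an initial subpath of the other and both are non-trivial. Otherwise $(e, e')$ is $g$-long, with $g(e) = \gamma_0\gamma_1$, $g(e') = \gamma_0\gamma'_1$ and $g^{LT}(e, e') = (\gamma_1, \gamma'_1)$; the legalizing hypothesis on $g$ forces this image long turn to be legal, but if $\gamma_0$ were trivial the first edges of $\gamma_1, \gamma'_1$ would be $Dg(e), Dg(e') \in \mathfrak g$, rendering the image illegal. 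Hence $\gamma_0$ is non-trivial, providing the desired common initial subpath.

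For part (b), with part (a) established the argument is short. That $\mathbf G(f\circ g)$ is finer than $\mathbf G$ is proved exactly as in the easier direction above, using that $f\circ g$ is itself a train track morphism with respect to $\mathbf G$ (Remark~\ref{composition}). For the reverse, I reuse the key observation from part (a): if $e, e'$ lie in the same gate of $\mathbf G$ then $g(e)$ and $g(e')$ share a non-trivial common initial subpath $\beta$. Since $g(e)$ and $g(e')$ are legal, so are $f(g(e))$ and $f(g(e'))$; in particular no cancellation occurs when $f$ is applied, and both paths begin with $f(\beta)$, which is non-trivial because $f$ has no contracted edges. Therefore $e, e'$ lie in the same gate of $\mathbf G(f\circ g)$.

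The main obstacle is the case analysis in the harder direction of part (a): one must handle carefully the interplay between the possibly trivial common subpath $\gamma_0$ appearing in the definition of $g^{LT}$, the gate-stability hypothesis that confines $Dg(e)$ and $Dg(e')$ to the same gate $\mathfrak g$, and the legalizing hypothesis applied to the illegal long turn $(e, e')$. Once this observation is in hand, everything else follows directly from the definitions.
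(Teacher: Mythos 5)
Your proof is correct and follows essentially the same route as the paper's: gate-stability confines the initial edges of $g(e)$ and $g(e')$ to a common gate, so the legalizing hypothesis forces a non-trivial common initial subpath, giving that $\mathbf G$ is finer than $\mathbf G(g)$, while the reverse refinement is the general fact that $\mathbf G(g)$ is the finest gate structure for which $g$ is a train track morphism. Your explicit treatment of the $g$-degenerate case, and your pushing of the common initial subpath $\beta$ through $f$ in part (b) (rather than noting that $D(f\circ g)$ is constant on gates), are only cosmetic variations, and if anything slightly more careful than the published argument.
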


\begin{proof}
  (a) From the hypothesis that $g$ is legalizing we know that the
  $g$-image of every $g$-long illegal turn $(\gamma, \gamma')$ is a
  legal long turn. But since $g$ is gate-stable, the initial edges of
  $g(\gamma)$ and $g(\gamma')$ must lie in the same gate, so that they
  cannot form a legal turn. Hence they must belong to the common
  initial subpath of $g(\gamma)$ and $g(\gamma')$ and thus indeed be
  identical.

  Since this is true for any illegal turn, all legal paths exiting
  from the same gate must have $g$-images with coinciding initial
  edge. This proves that the gate structure $\mathbf G$ is finer than
  or equal to $\mathbf G(g)$. The converse is true for any self-map
  $g$ that is a train track morphism with respect to a given gate
  structure $\mathbf G$, see Remark~\ref{classic-tts} (3).

\smallskip
\noindent
(b) Since $f$ is a train track morphism and hence has no contracted
edges, the above proved fact, that $Dg$ maps all edges in any given
gate to a single edge, is inherited by $D (f\circ g)$. Hence the
arguments from the previous paragraph are also true for $f\circ g$
instead of $g$, so that we obtain $\mathbf G(f \circ g) = \mathbf G$.
\end{proof}

\begin{rem}
\label{delicacy1}
(1) Note that the proof of the last lemma stays valid if the
hypothesis ``$g$ gate-stable'' is replaced by the weaker assumption
''$g$ gate structure morphism'' (see Definition~\ref{gate-morphism}).
In particular, by Lemma~\ref{periodic-gates} it suffices to assume
that all vertices of $\Gamma$ are periodic under the map $g$.

\smallskip
\noindent
(2) Note also that there is a delicacy in Statement~(b) of the
last lemma: The analogous statement for the composition $g \circ f$ is
in general wrong, unless one assumes that $f$ is a gate structure
morphism.
\end{rem}

\section{Legalizing maps for iwip automorphisms}
\label{legalizing-maps}

We will now concentrate on the situation of a classical train track
map $f: \Gamma \to \Gamma$, which is a train track morphism with
respect to the intrinsic gate structure $\mathbf G(f)$ on $\Gamma$,
see Remark~\ref{classic-tts}.  We assume furthermore that $f$ is a
homotopy equivalence so that it possesses a cancellation bound $C(f)$,
and that it satisfies the following expansion property:

\begin{defn}
\label{K-expanding}
For any constant $K \geq 1$ a train track morphism
$f: \Gamma \to \Gamma'$ is called {\em strongly $K$-expanding} if
every legal edge path $\gamma$ in $\Gamma$ of length $|\gamma|\geq K$
has $f$-image which is strictly longer:
\[
|f(\gamma)| \geq |\gamma| +1.
\]
\end{defn}

\begin{rem}
\label{discuss-expanding}
Expanding train track morphisms (see \S\ref{preliminaries}) are not
necessarily strongly $K$-expanding for some $K \geq 1$ (and
conversely), but it follows directly from the definitions that every
expanding train track morphism has a positive power which is strongly
$1$-expanding.
\end{rem}

We define the {\em minimal stretching factor} $\lambda^K_{min}(f)$ of
$f$ for legal paths of length $\geq K$ by:
\[
\lambda^K_{min}(f) := \min\{\frac{|f(\gamma)|}{|\gamma|} \mid
\gamma\text{ legal of length }|\gamma|\geq K\}
\]

We will now derive from any cancellation bound $C(f) \geq 0$ of a
strongly $K$-expanding train track morphism $f: \Gamma \to \Gamma$ an 
{\em expansion bound} $C(f)^+ \geq 0$:

\begin{lem}
\label{map-on-long-turns}
Let $f: \Gamma \to \Gamma$ be a train track map which possesses a
cancellation bound $C(f)$ and is strongly $K$-expanding for some $K
\geq 1$.  We define:
\[
C(f)^+ := \max(K,\frac{C(f)}{\lambda^K_{min}(f) - 1}).
\]
Let $C \geq C(f)^+$ be an integer.  Then the map $f$ induces a map
\[
f^{LT_C}: LT_C(\Gamma) \to LT_C(\Gamma), \,\, (\gamma,\gamma') \mapsto
f^{LT}(\gamma,\gamma')\chop^C\, .
\]
\end{lem}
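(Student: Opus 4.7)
The goal is to verify, for an arbitrary $(\gamma,\gamma') \in LT_C(\Gamma)$ with $C \geq C(f)^+$, both that $(\gamma,\gamma')$ is $f$-long and that the two branches of $f^{LT}(\gamma,\gamma')$ have length at least $C$, so that the $\chop^C$-truncation indeed produces an element of $LT_C(\Gamma)$. Everything will reduce to two length estimates on the decomposition $f(\gamma) = \gamma_0\gamma_1$, $f(\gamma') = \gamma_0\gamma_1'$, where $\gamma_0$ is the (possibly trivial) maximal common initial subpath: an upper bound $|\gamma_0| \leq C(f)$ and a lower bound $|f(\gamma)| \geq \lambda^K_{min}(f)\cdot C$ (and symmetrically for $\gamma'$).

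The upper bound is where the bounded backtracking constant $C(f)$ enters. I would exploit the fact that, since $\gamma$ and $\gamma'$ have distinct first edges, the reversed concatenation $\bar\gamma\cdot\gamma'$ is already a reduced edge path, and apply bounded backtracking to it: the point $f(v)$ (where $v$ is the common initial vertex of the long turn) lies on the unreduced image $\overline{f(\gamma)}\cdot f(\gamma')$, and the fact that this unreduced image stays within the $C(f)$-neighborhood of its reduction $\overline{\gamma_1}\cdot\gamma_1'$ translates directly into $|\gamma_0|\leq C(f)$. Here the legality (hence reducedness) of $f(\gamma)$ and $f(\gamma')$ individually ensures that the cancellation in $\overline{f(\gamma)}\cdot f(\gamma')$ is exactly the piece $\gamma_0$ on each side, with no further internal cancellation.

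For the lower bound, I would use strong $K$-expansion. Since $C \geq K$, the legal path $\gamma$ satisfies $|f(\gamma)| \geq \lambda^K_{min}(f)\cdot |\gamma|$ straight from the definition of the minimal stretching factor; a separate argument, subdividing $\gamma$ into consecutive legal subpaths of length at least $K$ and using that their $f$-images concatenate without cancellation (since $f(\gamma)$ is legal), shows that $\lambda^K_{min}(f) > 1$ under the strongly $K$-expanding hypothesis, so the quantity $C(f)/(\lambda^K_{min}(f)-1)$ is well defined. Combining, one obtains $|\gamma_1| = |f(\gamma)| - |\gamma_0| \geq \lambda^K_{min}(f)\cdot C - C(f)$, and plugging in the defining inequality $C \geq C(f)/(\lambda^K_{min}(f)-1)$ from $C(f)^+$ rearranges exactly to $|\gamma_1| \geq C$, symmetrically for $\gamma_1'$.

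These estimates finish the argument: $\gamma_1$ and $\gamma_1'$ are nontrivial (of length at least $C \geq K \geq 1$), making $(\gamma,\gamma')$ an $f$-long turn, and the truncation $(\gamma_1,\gamma_1')\chop^C$ is a well-defined long turn of branch length exactly $C$, hence an element of $LT_C(\Gamma)$. The one step I expect to require slight care is the justification that the cancellation in $\overline{f(\gamma)}\cdot f(\gamma')$ is exactly $|\gamma_0|$ edges on each side and no more: this uses the maximality built into the definition of $\gamma_0$ together with the legality (hence reducedness) of $f(\gamma)$ and $f(\gamma')$ separately, which rules out any further internal backtracking so that the bounded-backtracking displacement of $f(v)$ equals $|\gamma_0|$ on the nose. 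The rest is arithmetic built into the choice of $C(f)^+$.
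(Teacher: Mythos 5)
Your proof is correct and follows essentially the same route as the paper's: use the strong $K$-expansion and the definition of $\lambda^K_{min}(f)$ to get $|f(\gamma)|\geq |\gamma|+C(f)$ for $|\gamma|\geq C(f)^+$, use the cancellation bound to bound the cancelled initial piece $\gamma_0$ by $C(f)$, and conclude that each branch of the image long turn has length $\geq C$ so the truncation $\chop^C$ is defined. Your added justifications (that $\bar\gamma\cdot\gamma'$ is reduced so the bounded-backtracking displacement of $f(v)$ is exactly $|\gamma_0|$, and the subdivision argument showing $\lambda^K_{min}(f)>1$) are correct elaborations of steps the paper leaves implicit, not a different method.
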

\begin{proof}
  From the definition of the minimal stretching factor
  $\lambda^K_{min}(f)$ it follows that every legal path $\gamma$ of
  length $|\gamma| \geq C(f)^+\geq K$ is mapped by $f$ to a legal path
  of length $|f(\gamma)| \geq |\gamma| + C(f)$. Hence it follows from
  the definition of a cancellation bound $C(f)$ that any long turn
  $(\gamma,\gamma') \in LT_C(\Gamma)$ 
 is 
 $f$-long, and that its image
  long turn $f^{LT}(\gamma,\gamma')$ has length $\geq C$. Thus setting
  $(\gamma,\gamma') \mapsto f^{LT_C}(\gamma,\gamma')\chop^C$ defines
  indeed a well defined map $f^{LT_C}: LT_C(\Gamma) \to LT_C(\Gamma)$.
\end{proof}

\begin{prop}
\label{Nielsenpaths}
Let $f: \Gamma \to \Gamma$ be a train track map which possesses a
cancellation bound $C(f)$ and is strongly $K$-expanding for some $K
\geq 1$. Let $C \geq C(f)^+$.

Then for any $f^{LT_C}$-periodic illegal long turn $(\gamma, \gamma') \in
LT_C(\Gamma)$ the concatenation $\bar\gamma \circ \gamma'$ contains a
periodic INP as subpath.  Conversely, every periodic INP in $\Gamma$
can be prolonged on both sides so that the two legal branches give an
$f^{LT_C}$-periodic illegal long turn in $LT_C(\Gamma)$.
\end{prop}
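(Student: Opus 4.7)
The plan is to build the INP as the subpath of $\bar\gamma \cdot \gamma'$ between two fixed points of a specific lift of $f^t$ to the universal cover, using the eigenray structure that periodicity forces on the branches. The converse direction amounts to extending the INP's branches out to length $C$.

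To begin, I would unpack the hypothesis $(f^{LT_C})^t(\gamma,\gamma') = (\gamma,\gamma')$ (using Lemma \ref{product-map} iteratively) to obtain the structural identity $f^t(\gamma) = \delta\cdot\gamma\cdot\beta$ and $f^t(\gamma') = \delta\cdot\gamma'\cdot\beta'$, where $\delta$ is the maximal common prefix of $f^t(\gamma)$ and $f^t(\gamma')$ and necessarily ends at the common initial vertex $v$ of $\gamma,\gamma'$. Iterating produces $f^{nt}(\gamma) = \Delta_n\cdot\gamma\cdot B_n$ (and analogously for $\gamma'$) with $B_n$ a prefix of $B_{n+1}$, so the direct limits define infinite legal rays $R_L = \gamma\cdot B_\infty$ and $R_R = \gamma'\cdot B'_\infty$ issuing from $v$, whose concatenation $\bar R_L \cdot R_R$ is a bi-infinite reduced path with a single illegal turn at $v$. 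Lifting to $\tilde\Gamma$ and choosing the lift $\tilde f^t$ of $f^t$ that sends $\tilde v$ to the initial vertex of the lift $\tilde\delta$ of $\delta$ ending at $\tilde v$, one verifies $\tilde f^t(\tilde R_L) = \tilde\delta\cdot \tilde R_L$ and hence $[\tilde f^t(\bar{\tilde R_L}\cdot\tilde R_R)] = \bar{\tilde R_L}\cdot\tilde R_R$.

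The technical heart is to locate a $\tilde f^t$-fixed point on $\tilde R_L$ inside the initial length-$C$ segment $\tilde\gamma$. The self-map $\tilde f^t|_{\tilde R_L}$ sends $\tilde v$ a signed distance $-|\delta|$ off the beginning of $\tilde\gamma$ while stretching legal paths by a factor $\geq \lambda^t$ with $\lambda := \lambda^K_{min}(f)$, so its unique fixed point sits at arclength approximately $|\delta|/(\lambda^t-1)$ from $\tilde v$. Combining the backtracking estimate $|\delta|\leq C(f^t)\leq C(f)(\lambda^t-1)/(\lambda-1)$ with $C\geq C(f)^+\geq C(f)/(\lambda-1)$ yields $|\delta|/(\lambda^t-1)\leq C$, placing the fixed point inside $\tilde\gamma$; a symmetric argument produces a fixed point inside $\tilde\gamma'$. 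The sub-path of the axis between these two fixed points is then of the form $\tilde\alpha^{-1}\cdot\tilde\alpha'$ with non-trivial legal branches contained in $\tilde\gamma,\tilde\gamma'$, has $\tilde f^t$-fixed endpoints, and reduces back to itself under $\tilde f^t$; projecting yields the required periodic INP inside $\bar\gamma\cdot\gamma'$.

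For the converse, given a periodic INP $\eta = \alpha^{-1}\cdot\alpha'$ with $[f^s(\eta)]=\eta$ rel endpoints, unwinding the reduction forces $f^s(\alpha)$ and $f^s(\alpha')$ to share a common prefix $\delta'$ ending at the illegal-turn vertex, with $f^s(\alpha) = \delta'\cdot\alpha\cdot(\text{rest})$ and analogously for $\alpha'$. I would then extend $\alpha,\alpha'$ legally to paths $\gamma,\gamma'$ of length $C$ (always possible since at every vertex of a train track one can continue through any legal outgoing gate), and the same shift identity shows $(\gamma,\gamma')\in LT_C(\Gamma)$ is an illegal $(f^{LT_C})^s$-fixed long turn. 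The main obstacle is the quantitative comparison in the middle step: the backtracking constant of $f^t$ grows exponentially in $t$, and one must verify that it is exactly absorbed by the compound expansion factor $\lambda^t - 1$. The definition $C(f)^+ = \max(K, C(f)/(\lambda^K_{min}(f)-1))$ is engineered precisely so this estimate goes through.
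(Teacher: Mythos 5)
Your overall strategy matches the paper's (locate $f^t$-fixed points on the two branches and join them across the illegal turn; for the converse, prolong the INP branches to length $C$ and use periodicity), but two steps as written do not hold up.

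First, in the forward direction your location of the fixed point rests on the inequality $C(f^t)\leq C(f)(\lambda^t-1)/(\lambda-1)$ with $\lambda=\lambda^K_{min}(f)$. This is false in general: the bounded backtracking constant of an iterate satisfies $C(f^t)\leq C(f)(1+L+\dots+L^{t-1})$ where $L$ is the \emph{maximal} stretch (Lipschitz) constant of $f$, which is typically strictly larger than the minimal stretch $\lambda^K_{min}(f)$; the same problem appears if you bound the total cancelled prefix $\delta=f^{t-1}(\delta_0)f^{t-2}(\delta_1)\cdots\delta_{t-1}$ term by term. So $|\delta|/(\lambda^t-1)\leq C$ does not follow; the constant $C(f)^+$ is engineered to absorb the cancellation of \emph{one} application of $f$ (that is the content of Lemma~\ref{map-on-long-turns}), not of $t$ applications at once. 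Fortunately no quantitative estimate is needed: iterating $f^{LT_C}$ one step at a time shows that for the appropriate lift $F$ of $f^t$ one has $F(\tilde\gamma)=\tilde\delta\cdot\tilde\gamma\cdot\tilde\beta$, so in the linear order of the legal line the endpoints of $\tilde\gamma$ satisfy $F(\tilde v)\leq\tilde v$ and $F(\tilde p)\geq\tilde p$, and the intermediate value theorem already yields a fixed point on $\tilde\gamma$ itself. (You should also justify that this fixed point is not $\tilde v$, i.e.\ that the branches of $\eta$ are non-trivial; this uses illegality with respect to the intrinsic gate structure $\mathbf G(f)$, which forces $\delta$ to be non-trivial after replacing $t$ by a suitable multiple.)

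Second, in the converse direction you assert that \emph{any} legal prolongation $(\gamma,\gamma')$ of the INP branches to length $C$ is $(f^{LT_C})^s$-\emph{fixed}. That is not true: $f^s(\gamma)=\delta'\cdot\alpha\cdot f^s(\mathrm{ext})$, and there is no reason for $f^s(\mathrm{ext})$ to begin with $\mathrm{ext}$, so the truncated image long turn need not reproduce $(\gamma,\gamma')$. What the statement requires is only \emph{periodicity}, and the missing step is a finiteness argument: every $(f^{LT_C})^{ks}$-iterate of the prolonged turn still contains $\eta$ with its illegal turn in the marked position, and since $LT_C(\Gamma)$ is finite some such iterate must be $(f^{LT_C})^s$-periodic; that iterate is the prolongation one wants. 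Two smaller points: since the endpoints of $\eta$ are $f^s$-fixed you in fact have $f^s(\alpha)=\delta'\cdot\alpha$ with no ``rest''; and you should record that the branches of a periodic INP have length at most $C(f)^+\leq C$, so that the prolongation to length exactly $C$ is possible at all.
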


\begin{proof}
  We know from Lemma~\ref{map-on-long-turns} that $f$ induces a well
  defined map $f^{LT_C}$ on the long turns in $LT_C(\Gamma)$. Assume
  now that for some integer $t \geq 1$ the long turn $(\gamma,
  \gamma') \in LT_C(\Gamma)$ is illegal and fixed by $(f^{LT_C})^t$.
  Then $\gamma$ is a subpath of $f^t(\gamma)$, and $\gamma'$ is a
  subpath of $f^t(\gamma')$.  Thus on both legal paths $\gamma$ and
  $\gamma'$ there must be a fixed point, which by the illegality of
  the turn must be different from the initial vertex of both, $\gamma$
  and $\gamma'$. We can define $\eta$ to be the path crossing over the
  illegal turn and connecting those two fixed points. Then
  $[f^t(\eta)] = \eta$, and since $\eta$ crosses over precisely one
  illegal turn, it follows that it is a periodic INP.

  Conversely, it follows from a standard calculation that the legal
  branches of any periodic INP $\eta$ can not be longer than
  $C(f)^+$. Thus they can be prolonged by legal paths so that this
  prolongation gives a long turn $(\gamma, \gamma') \in
  LT_C(\Gamma)$.
  Let now the integer $t \geq 1$ be such that $[f^t(\eta)] = \eta$. We
  consider the iterates of $(\gamma, \gamma')$ under $f^t$ and note
  that they all contain $\eta$ as subpath, in such a way that the
  illegal turn on $\eta$ coincides with the illegal turn formed by
  $(\gamma, \gamma')$ (and thus also by all of its
  $f^t$-iterates). Since $LT_C(\Gamma)$ is finite, eventually some
  such iterate $f^{kt}(\gamma, \gamma')$ will be
  $(f^{LT_C})^t$-periodic.  This shows the ``converse'' direction of
  the claim.
\end{proof}

\begin{cor}
\label{INP-alternative}
Let $f:\Gamma \to \Gamma$ be an expanding train track map that
represents an automorphism of $\FN$. Then precisely one of the
following is true:
\begin{enumerate}
\item[(a)] The map $f$ possesses a periodic INP, or
\item[(b)] every sufficiently high power of $f$ is legalizing for $\mathbf G(f)$.
\end{enumerate}
\end{cor}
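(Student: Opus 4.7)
The plan is to exploit Proposition~\ref{Nielsenpaths}, which identifies periodic INPs of a train track map with $f^{LT_C}$-periodic illegal long turns in the finite set $LT_C(\Gamma)$, together with Remark~\ref{legal-long} asserting that legal long turns are forward-invariant under any train track morphism. For the exclusion (a)$\Rightarrow\neg$(b), assume $f$ has a periodic INP $\eta$. For every $m\geq 1$ the path $\eta$ remains a periodic INP of $f^m$, so Proposition~\ref{Nielsenpaths} applied to $f^m$ (with any $C\geq C(f^m)^+$) produces an illegal $(f^m)^{LT_C}$-periodic long turn $(\gamma,\gamma')\in LT_C(\Gamma)$. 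Its periodic orbit cannot contain a legal element, since by Remark~\ref{legal-long} legality would propagate around the whole orbit and contradict the illegality of $(\gamma,\gamma')$. Hence $(f^m)^{LT}(\gamma,\gamma')$ is illegal, so $f^m$ is not legalizing.

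For the reverse direction $\neg$(a)$\Rightarrow$(b), assume $f$ has no periodic INP. Replacing $f$ by a positive power (Remark~\ref{discuss-expanding}), we may assume $f$ is strongly $1$-expanding; this substitution is harmless because Proposition~\ref{composition-legalizing} implies that if $f^{kq}$ is legalizing, then $f^n=f^{n-kq}\circ f^{kq}$ is legalizing for every $n\geq kq$. Fix $C\geq C(f)^+$, so that by Lemma~\ref{map-on-long-turns} the induced map $f^{LT_C}\colon LT_C(\Gamma)\to LT_C(\Gamma)$ is well defined on the finite set $LT_C(\Gamma)$. Proposition~\ref{Nielsenpaths} then forbids any $f^{LT_C}$-periodic illegal long turn. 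Since the subset of legal long turns in $LT_C(\Gamma)$ is $f^{LT_C}$-invariant (Remark~\ref{legal-long}) and $LT_C(\Gamma)$ is finite, every $f^{LT_C}$-orbit starting at an illegal long turn enters the legal subset after at most some uniformly bounded number of iterations $T$. By Lemma~\ref{product-map}, for every $m\geq T$ the map $(f^m)^{LT_C}=(f^{LT_C})^m$ sends every illegal long turn in $LT_C(\Gamma)$ to a legal one.

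The principal difficulty is to promote this $LT_C(\Gamma)$-level conclusion to the full legalizing condition on $f^m$: every illegal $f^m$-long turn in $\Gamma$ should have legal $f^m$-image. For $m$ large enough, strong $1$-expansion forces $C(f^m)^+\leq C$, so Lemma~\ref{map-on-long-turns} applies to $f^m$ with the same constant $C$ and ensures that $C$-subturns of $f^m$-long turns are again $f^m$-long. Given an illegal $f^m$-long turn $(\gamma_1,\gamma_2)$: if both branches have length $\geq C$, then the subturn $(\gamma_1,\gamma_2)\chop^C\in LT_C(\Gamma)$ has the same $f^m$-image legality as $(\gamma_1,\gamma_2)$ by Remark~\ref{subturns}, and the preceding paragraph delivers legality of the image; if some branch is shorter, one extends $(\gamma_1,\gamma_2)$ legally into a super-turn in $LT_C(\Gamma)$ and applies Remark~\ref{subturns} in the reverse direction. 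This final upgrade, where one must match the quantitative constants between $f$ and $f^m$ and handle the short-branch case using strong expansion and the gate structure $\mathbf{G}(f)$, is the main obstacle of the proof.
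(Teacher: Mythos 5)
Your proposal is correct and follows essentially the same route as the paper: Proposition~\ref{Nielsenpaths} to rule out $f^{LT_C}$-periodic illegal long turns, then finiteness of $LT_C(\Gamma)$ plus the forward-invariance of legality (Remark~\ref{legal-long}) to conclude that all long turns in $LT_C(\Gamma)$ become legal after a bounded number of iterations. The only differences are that you spell out the mutual exclusivity of (a) and (b) (which the paper dismisses as ``clear'') and the final reduction of the legalizing condition to long turns of branch length $C$, which the paper delegates to the remark following Definition~\ref{legalizing} via Remarks~\ref{short-long-turns} and \ref{subturns}.
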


\begin{proof}
  Since $f$ represents an automorphism, it possesses a cancellation
  bound.  Since $f$ is expanding, any sufficiently large power of $f$
  will be strongly $K$-expanding for $K = 1$ (see
  Remark~\ref{discuss-expanding}). Thus Proposition~\ref{Nielsenpaths}
  applies, so that, in case that $f$ does not possess a periodic INP,
  we can deduce that there is no illegal $f$-periodic long turn
  $(\gamma, \gamma') \in LT_C(\Gamma)$, for $C$ as in
  Proposition~\ref{Nielsenpaths}.

  It follows that after applying $f$ iteratively at least $k_0 :={\rm
    card}\, LT_C(\Gamma)$ times, any long turn $(\gamma, \gamma') \in
  LT_C(\Gamma)$ must have become legal. Since it stays legal under
  further iteration of $f$ (see Remark~\ref{legal-long}), every $f^k$ with $k \geq k_0$ must be
  legalizing.

  Clearly both (a) and (b) can not hold simultaneously.  Thus we have
  proved the desired dichotomy.
\end{proof}

One can derive from the last proof that the lower bound for the
exponent of $f$ needed in statement (b) of
Corollary~\ref{INP-alternative} can be efficiently calculated from the
train track map $f$ with not much effort. It turns out that it only
depends on the cancellation bound $C(f)$ and 
on 
the rank $N$ of the
free group $\FN$.

\begin{proof} [Proof of Theorem~\ref{thm:CL-main}]
  Since $g$ is assumed to be a train track morphism with respect to
  the gate structure $\mathbf G(f)$, then so must be $f\circ g$. Since
  $g$ induces an automorphism on $\pi_1\Gamma$, so does $f\circ g$, so
  that $f\circ g$ is a train track morphism which represents an
  automorphism of $\FN$.

  Recall from Section~\ref{preliminaries} that the transition matrix
  of $f\circ g$ is obtained as product $M(f\circ g) = M(f) \cdot
  M(g)$. Hence $M(f\circ g)$ inherits positivity from the assumed
  positivity of $M(f)$. Thus in particular $M(f\circ g)$ is primitive.

  As $g$ is gate-stable and legalizing for $\mathbf G(f)$, by
  Lemma~\ref{lem:legalizing-implies-intrinsec} the intrinsic gate
  structure $\mathbf G(f\circ g)$ is equal to $\mathbf G(f)$.

  By hypothesis, $f$ represents an iwip automorphism. From
  Proposition~\ref{prop:no-iwp-implies-irred}, we know that for any
  vertex $v$ the graph $Wh_{\mathbf G}^v(f)$ is connected. As $g$ is
  gate-stable, by Proposition~\ref{Wh-composition} the
  gate-Whitehead-graph $Wh^v_{\mathbf G}(f\circ g)$ must also be
  connected.

  We now observe from Proposition~\ref{composition-legalizing} that
  the composed map $f\circ g$ must be legalizing, which implies by
  Corollary~\ref{INP-alternative} that there are no periodic INPs for
  $f\circ g$.

  Thus the conditions (\ref{cond:primitive}),
  (\ref{cond:gwg-connected}) and (\ref{cond:no-inp}) of
  Proposition~\ref{JL-quote} are all satisfied for the map $f\circ g$,
  which hence must induce an iwip automorphism.
Corollary~\ref{index-lists} 
concludes the proof.
\end{proof}

\begin{rem}
\label{irred-up-to-iwip}
(1) We see from the above proof that the hypotheses in 
Theorem~\ref{thm:CL-main} can be weakened 
somewhat:
In the proof it is
never used that the automorphism represented by $f$ is iwip. It
suffices to assume that $M(f)$ is positive, and that the
gate-Whitehead-graph at every periodic vertex is connected.

\smallskip
\noindent
(2) We also don't use the fact that the gate structure on $\Gamma$ is
equal to $\mathbf G(f)$. It suffices to assume that $f$ and $g$ are
train track 
morphisms 
with respect to some fixed gate structure
$\mathbf G$, if in statement (\ref{concl:main-index}) of
Theorem~\ref{thm:CL-main} the list of gate indices at the $f$-periodic
vertices of $\Gamma$ is computed with respect to the gate structure
$\mathbf G$ (see Definition~\ref{gate-index}).  This is a consequence
of Lemma~\ref{lem:legalizing-implies-intrinsec}.

\smallskip
\noindent
(3) A careful analysis of the above proof and its various ingredients,
shows that the statement of Theorem~\ref{thm:CL-main} is valid as well
for the map $g\circ f$ in place of $f\circ g$, if one assumes in
addition that $f$ is a gate structure morphism (see
Remark~\ref{delicacy1} (2)).
\end{rem}

We'd like to remark here that part (2) of the previous remark gives
the possibility to produce, from a given train track map $f$ with a
fine gate structure, through properly choosing the legalizing
``perturbation map'' $g$, a variety of train track maps $f\circ g$
with coarser gate structures and thus, via
part~(\ref{concl:main-index}) of Theorem~\ref{thm:CL-main}, with
smaller index lists than $f$. A useful technology for the deliberate
production of such perturbation maps is described in the next section.

We conclude this section by passing to a larger set of product maps:

\begin{cor}
\label{monoid}
Let $\Gamma$ be a graph equipped with a gate structure $\mathbf G$,
and for any index $i$ of some index set $I$ let
$f_i:\Gamma \to \Gamma$ be a train track morphism with respect to
$\mathbf G$. Assume that each $f_i$ satisfies the following
properties:
\begin{enumerate}
\item\label{cond:monoid-intrinsic} The intrinsic gate structure satisfies
  $\mathbf G(f_i) = \mathbf G$.
\item\label{cond:monoid-matrix} The transition matrix $M(f_i)$ is positive.
\item\label{cond:monoid-connected} For any vertex $v$ of $\Gamma$ the graph $Wh_{\mathbf G}^v(f_i)$
  is connected.
\item\label{cond:monoid-no-inp} There is no periodic INP for $f_i$ in $\Gamma$.
\item\label{cond:monoid-gate-stable} The map $f_i$ is gate stable.
\end{enumerate}
Then there exist exponents $m_i \geq 1$ such that the
properties~(\ref{cond:monoid-matrix})~-~(\ref{cond:monoid-gate-stable})
hold for every element in the monoid generated by the $f_i^{m_i}$,
i.e. for any product
\[
f = f_{i_1}^{m_{i_1}} f_{i_2}^{m_{i_2}} \cdots f_{i_s}^{m_{i_s}}
\]
of the $f_i^{m_{i}}$ (but not their inverses!).  Furthermore, any such
map $f$ represents an iwip automorphism, the map $f$ is legalizing,
and the index list of $f$ is equal to the list of gate indices of
$\mathbf G$ at the vertices of $\Gamma$ with 3 or more gates.
\end{cor}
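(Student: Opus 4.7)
The plan is to first fix suitable exponents $m_i$ so that each $f_i^{m_i}$ is legalizing, and then to handle arbitrary monoid products by induction on the word length $s$, using Theorem~\ref{thm:CL-main} as the workhorse of the inductive step. For the choice of the $m_i$: conditions~(\ref{cond:monoid-intrinsic})--(\ref{cond:monoid-no-inp}) together with Proposition~\ref{JL-quote} force each $f_i$ to represent an iwip automorphism of $\FN$, and positivity of $M(f_i)$ makes $f_i$ expanding. Hence Corollary~\ref{INP-alternative} applied to $f_i$ (whose only alternative, a periodic INP, is excluded by~(\ref{cond:monoid-no-inp})) produces an exponent $m_i \geq 1$ such that $f_i^{m_i}$ is legalizing with respect to $\mathbf{G}(f_i)=\mathbf{G}$. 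I then need to check that properties~(\ref{cond:monoid-matrix})--(\ref{cond:monoid-gate-stable}) pass to the power: positivity of $M(f_i^{m_i})$ is trivial, absence of periodic INPs and gate-stability are immediate, and condition~(\ref{cond:monoid-connected}) is inherited by iterating Proposition~\ref{Wh-composition} (which applies since $f_i$ is gate-stable).

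I will then run the induction on $s$. For $s=1$ the power $f_{i_1}^{m_{i_1}}$ already has all the listed properties, is legalizing by the choice of $m_i$, is iwip as a power of an iwip, and the equality of its stable index list with the gate index list of $\mathbf{G}$ follows from the ``no periodic INP'' principle recalled in Section~\ref{sec:index-branch-gate}. For $s \geq 2$ I will write $f = F_{s-1} \circ f_{i_s}^{m_{i_s}}$ with $F_{s-1} := f_{i_1}^{m_{i_1}} \cdots f_{i_{s-1}}^{m_{i_{s-1}}}$; by the induction hypothesis $F_{s-1}$ is a gate-stable legalizing train track morphism (with respect to $\mathbf{G}$) representing an iwip automorphism, with positive transition matrix and connected gate-Whitehead-graphs, and with $\mathbf{G}(F_{s-1})=\mathbf{G}$ by Lemma~\ref{lem:legalizing-implies-intrinsec}(a). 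Applying Theorem~\ref{thm:CL-main} with $F_{s-1}$ in the role of ``$f$'' and $f_{i_s}^{m_{i_s}}$ in the role of ``$g$'' then delivers iwip-ness of $f$, the absence of periodic INPs, and the coincidence of the stable index list of $f$ with the gate index list of $F_{s-1}$, which by $\mathbf{G}(F_{s-1})=\mathbf{G}$ is the list of gate indices of $\mathbf{G}$ at vertices with at least $3$ gates (gate-stability makes every vertex fixed, so ``essential'' reduces to ``at least $3$ gates''). The remaining conditions carry over automatically: (\ref{cond:monoid-matrix}) from $M(f)=M(F_{s-1})M(f_{i_s}^{m_{i_s}})$, (\ref{cond:monoid-connected}) from Proposition~\ref{Wh-composition}, (\ref{cond:monoid-gate-stable}) since gate-stability is preserved under composition, and (\ref{cond:monoid-intrinsic}) from Lemma~\ref{lem:legalizing-implies-intrinsec}(a) after noting via Proposition~\ref{composition-legalizing} that $f$ is itself legalizing.

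The main obstacle, in my view, is the careful bookkeeping required to ensure that the intrinsic gate structure of each partial product $F_k$ remains equal to $\mathbf{G}$ throughout the induction, since it is exactly this equality that licenses the application of Theorem~\ref{thm:CL-main} at each stage, and in particular guarantees that the ``gate index list of $F_{s-1}$'' appearing in its conclusion is the fixed list attached to $\mathbf{G}$ rather than some \emph{a priori} coarser refinement. Lemma~\ref{lem:legalizing-implies-intrinsec}(a) combined with Proposition~\ref{composition-legalizing} handles exactly this point, which is effectively the purpose for which these two auxiliary results were designed; once they are in hand the induction itself is purely formal, being a repeated mechanical invocation of Theorem~\ref{thm:CL-main}.
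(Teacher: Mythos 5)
Your proposal is correct and follows essentially the same route as the paper's proof: choose the exponents $m_i$ via Corollary~\ref{INP-alternative}, observe that positivity, connectedness of the gate-Whitehead-graphs, gate-stability and the legalizing property are all inherited by compositions, use Lemma~\ref{lem:legalizing-implies-intrinsec} to keep the intrinsic gate structure equal to $\mathbf G$, and then invoke Theorem~\ref{thm:CL-main} on a two-factor decomposition of each product to get iwip-ness, the absence of periodic INPs, and the index list. The paper merely states that these properties are ``inherited by products'' where you spell out the induction on word length explicitly; this is a difference of bookkeeping, not of substance.
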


\begin{proof}
  By Corollary~\ref{INP-alternative} there exist exponents
  $m_i \geq 1$ such that each of the maps $f_i^{m_i}$ is
  legalizing. Furthermore, conditions (\ref{cond:monoid-matrix}),
  (\ref{cond:monoid-connected}) and (\ref{cond:monoid-no-inp}) ensure
  via Proposition~\ref{JL-quote} that $f_i$ represents an iwip
  automorphism of $\FN$.  Conditions~(\ref{cond:monoid-matrix}),
  (\ref{cond:monoid-connected}) and (\ref{cond:monoid-gate-stable})
  are inherited by products, if they are satisfied by every
  factor. The same is true for the property ``legalizing'', which
  implies condition~(\ref{cond:monoid-no-inp}).  By
  Lemma~\ref{lem:legalizing-implies-intrinsec}
  condition~(\ref{cond:monoid-intrinsic}) is a consequence of
  condition~(\ref{cond:monoid-gate-stable}) together with the property
  ``legalizing''.

  Hence all conditions for the factors $f$ and $g$ in
  Theorem~\ref{thm:CL-main} are satisfied for any of the maps
  $f_i^{m_i}$ as well as for any product $f$ as above. Thus the
  conclusion~(\ref{concl:main-index}) of Theorem~\ref{thm:CL-main}
  hold as well for $f$, which proves the last assertion in the
  statement of Corollary~\ref{monoid}.
\end{proof}

The above corollary admits a natural extension to a more involved
situation, where one considers simultaneously several graphs
$\Gamma_k$ with gate structures $\mathbf G_k$, as well as maps
$f_i: \Gamma_k \to \Gamma_{k'}$ which induce bijections on the
vertices with 3 or more gates, as well as bijections on the set of
their adjacent gates. This leads one directly to consider ``strata''
in Outer space, in analogy to strata in Teichm\"uller space as defined
by fixing the indices of the singularities of quadratic differentials,
see \cite{MS}.

\section{Legalizing Factory}\label{sec:legalizing-factory}

In this section we reduce the construction of a legalizing train track
morphism to the construction of a family of ``elementary'' train track
morphisms that each legalizes only a single illegal turn.

\begin{prop}\label{prop:legalizing-construction}
  Let $\Gamma$ be a graph equipped with a gate structure $\mathbf
  G$. Assume that there exists an integer $L\geq 1$ which satisfies:
\begin{enumerate}
\item\label{hyp:prop-legalizing-illeg-turn} For each illegal long turn
  $t = (\gamma, \gamma')$ of branch length $L$ there exists a train
  track morphism $g_t:\Gamma\to\Gamma$ such that $t$ is $g_t$-long and
  mapped by $g_t^{LT}$ to a legal long turn.
\item\label{hyp:prop-legalizing-expanding} There exists a train track
  morphism $h:\Gamma\to\Gamma$ which is strongly $K$-expanding for
  some $K\geq 1$.
\end{enumerate}
We assume furthermore that 
each 
of the above maps $g_t$ and $h$ has a
cancellation bound $C(g_t)$ or $C(h)$ respectively (which is true if
they induce automorphisms of $\pi_1(\Gamma)$).  Then there exists a
legalizing train track morphism $g: \Gamma \to \Gamma$ which is
obtained as a composition of the $g_t$ and $h$.
\end{prop}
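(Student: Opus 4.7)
The plan is to build $g$ iteratively, handling the finitely many illegal long turns in $LT_L(\Gamma)$ one by one through appropriate $g_t$ factors, and interleaving with large powers of $h$ to maintain enough expansion so that the subturn extraction remains well-defined throughout the construction.

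Step 1 (setup). The set $\mathcal T \subseteq LT_L(\Gamma)$ of illegal long turns of branch length $L$ is finite. First I pass to a larger integer $L' \geq \max(L,K)$ which will dominate the cancellation bounds arising in the construction. Hypothesis~(\ref{hyp:prop-legalizing-illeg-turn}) then extends automatically to $LT_{L'}(\Gamma)$: for any illegal $t \in LT_{L'}(\Gamma)$ the $L$-initial subturn $t\chop^L$ lies in $\mathcal T$, and by Remark~\ref{subturns}(1) the map $g_{t\chop^L}$ provided by the hypothesis legalizes $t$ as well.

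Step 2 (iteration). Set $g^{(0)} := h^{m_0}$, where $m_0$ is chosen large enough via Remark~\ref{discuss-expanding} so that $g^{(0)}$ is strongly $L'$-expanding with a quantitative margin over its cancellation bound. Given $g^{(k-1)}$, consider
\[
B_{k-1} := \{t \in LT_{L'}(\Gamma) : t \text{ illegal}, \; t \text{ is } g^{(k-1)}\text{-long}, \; (g^{(k-1)})^{LT}(t) \text{ illegal}\}.
\]
If $B_{k-1} = \emptyset$, stop. Otherwise pick $t_0 \in B_{k-1}$, let $s := (g^{(k-1)})^{LT}(t_0)$, whose branches have length $\geq L'$ by the maintained expansion, set $s^* := s\chop^{L'}$, which is illegal and lies in $LT_{L'}(\Gamma)$, and define
\[
g^{(k)} := h^{m_k} \circ g_{s^*} \circ g^{(k-1)},
\]
with $m_k$ again chosen large enough to restore the expansion margin.

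Step 3 (termination). Using Lemma~\ref{product-map}, Remark~\ref{subturns}(1) and Remark~\ref{legal-long}, one checks $B_k \subsetneq B_{k-1}$: the turn $t_0$ leaves $B$ because $s$ is $g_{s^*}$-long as an extension of $s^*$ and its $g_{s^*}^{LT}$-image is legal by the subturn relation, so $(g^{(k)})^{LT}(t_0)$ is legal and remains so under the trailing $h^{m_k}$; conversely any $t \in B_k$ must already lie in $B_{k-1}$, because a turn that is not $g^{(k-1)}$-long stays not $g^{(k)}$-long, and a legal image cannot turn illegal under post-composition. Since $LT_{L'}(\Gamma)$ is finite, the iteration terminates after at most $|LT_{L'}(\Gamma)|$ steps with a map $g := g^{(N)}$ satisfying $B_N = \emptyset$ together with the expansion margin.

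Step 4 (legalizing) and main obstacle. To verify $g$ is legalizing, let $T$ be any illegal $g$-long turn. If $T$ has branch length less than $L'$, extend its branches legally to length $L'$ (possible because $\Gamma$ has no valence-1 vertices); the extension is still $g$-long, illegal, and has the same image legality as $T$ by Remark~\ref{subturns}(1). If the branch length is at least $L'$, the expansion margin on $g$ forces $T\chop^{L'}$ to be $g$-long; it lies in $LT_{L'}(\Gamma)$, is illegal, and hence has legal image since $B_N = \emptyset$, and Remark~\ref{subturns}(1) transfers legality to $T$. The main obstacle is the precise bookkeeping of cancellation versus expansion: one must verify that $L'$ can be fixed once and for all at the start and that the exponents $m_k$ can be chosen so that the composition retains enough expansion margin over its growing cancellation bound at every step, so that every $s^*$ is well-defined as an element of $LT_{L'}(\Gamma)$ and so that every final $g$-long illegal turn admits a $g$-long $L'$-subturn.
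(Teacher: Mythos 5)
Your Steps 1--4 reproduce the combinatorial skeleton of the paper's argument (legalize one turn per step, show the set of bad turns strictly decreases, conclude by finiteness), but the point you defer as ``the main obstacle'' is not a bookkeeping detail to be checked at the end: it is the actual content of the proposition, and the resolution you gesture at does not work. Your construction needs, at every step $k$, that the composition $g^{(k-1)}$ maps each turn of $LT_{L'}(\Gamma)$ to a long turn with branches of length $\geq L'$, i.e.\ (in the language of Lemma~\ref{map-on-long-turns}) that $L' \geq C(g^{(k-1)})^+ = \max\bigl(K,\, C(g^{(k-1)})/(\lambda^K_{min}(g^{(k-1)})-1)\bigr)$. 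You propose to secure this by inserting large powers $h^{m_k}$. But raising the power of $h$ increases the cancellation of the composition at least as fast as it increases the guaranteed expansion: the cancellation at a single illegal turn composes roughly as $c(h^{m}\circ F) \le \Lambda(h)^{m}\, c(F) + c(h^{m})$, where $\Lambda(h) := \max_e|h(e)|$, whereas the minimal legal stretch only grows like $\lambda^K_{min}(h)^{m}$, and one always has $\lambda^K_{min}(h) \le \Lambda(h)$. So the quantity $C/(\lambda_{min}-1)$ that $L'$ must dominate can grow with $m_k$ and with the number of steps; and since the number of steps is bounded only by $|LT_{L'}(\Gamma)|$, which itself grows with $L'$, your requirement that $L'$ ``be fixed once and for all at the start'' is circular and cannot in general be met by taking the $m_k$ large.

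The paper breaks this circularity by never invoking a cancellation bound (or expansion margin) for the composition. It sets $g'_t := h\circ g_t$ for each of the finitely many illegal turns $t$ of branch length $L$, applies Lemma~\ref{map-on-long-turns} to each \emph{individual} factor to get its expansion bound $C(g'_t)^+$, and takes $C := \max(L, \max_t C(g'_t)^+)$. Each factor then induces a genuine self-map $(g'_t)^{LT_C}$ of the finite set $LT_C(\Gamma)$, and the composition $g_k$ acts on $LT_C(\Gamma)$ simply as the composition of these self-maps, with the correct legality of images guaranteed by Lemma~\ref{product-map} together with Remark~\ref{subturns}~(1). If you replace your $(g^{(k-1)})^{LT}(t_0)\chop^{L'}$ by the value of this composed map on $LT_C(\Gamma)$, and truncate its image to branch length $L$ before selecting the next factor, the rest of your argument (strict decrease of the bad set, finiteness, reduction of arbitrary illegal $g$-long turns to branch length $C$) goes through essentially as you wrote it.
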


\begin{proof}
  For each of the illegal long turns $t$ of $\mathbf G$ with branch
  length $L$ set $g'_t := h\circ g_t$, and observe that $g'_t$ is
  strongly $K$-expanding and inherits a cancellation bound $C(g'_t)$
  from $h$ and $g_t$.  Moreover, as $h$ is a train track morphism, it
  maps legal turns to legal turns, so that by
  hypothesis~(\ref{hyp:prop-legalizing-illeg-turn}) the long turn $t$
  is $g'_t$-long and mapped by $g'_t$ to a legal long turn.

  Let $C$ be the maximum of $L$ and of all the constants $C(g'_t)^+$,
  as defined in Lemma~\ref{map-on-long-turns} for any of the maps
  $g'_t$ via the cancellation bounds $C(g'_t)$ and the above constant
  $K$.  Then each $g'_t$ induces a well defined map ${g'_t}^{LT_C}$ on
  the set of long turns $LT_C(\Gamma)$.

  We can now build iteratively the legalizing train track morphisms we
  are looking for: Let $g_0$ be the identity map and
  $LT_C^{ill}(g_0) \subset LT_C(\Gamma)$ be the finite set of illegal
  long turns in $\Gamma$ of branch length $C$. We define iteratively
  graph maps $g_k: \Gamma \to \Gamma$ and nested subsets
\[
  LT_C^{ill}(g_k) \varsubsetneq LT_C^{ill}(g_{k-1}) \varsubsetneq
  \ldots \varsubsetneq LT_C^{ill}(g_0)
\]
by considering any turn $t^*$ in $LT_C^{ill}(g_k)$.  From the
iterative definition of $LT_C^{ill}(g_k)$ it follows that $t^*$ is
mapped by $g^{LT_C}_{k}$ to long a turn (of branch length $C$) which
is illegal. Let $t$ be the subturn of $g_k^{LT_C}(t^*)$ of branch
length $L \leq C$, which is of course also illegal.  We set
$g_{k+1} := g'_t\circ g_k$, and define $LT_C^{ill}(g_{k+1})$ to be the
set of illegal long turns in $\Gamma$ of branch length $C$ that are
mapped by $g_{k+1}^{LT_C}$ to an illegal long turn. Note that $g'_t$
was defined so that $t^*$ is mapped by $g_{k+1}^{LT}$ to a long turn
that contains the legal long turn $g'^{LT}_t(t)$ as subturn and is
therefore legal. Recall that if a long turn is mapped by $g_k^{LT_C}$
to a legal long turn, then, as $g'_t$ is a train track morphism, it is
also mapped by $g_{k+1}^{LT_C}$ to a legal long turn. In other words
$LT_C^{ill}(g_{k+1})\varsubsetneq LT_C^{ill}(g_k)$.

  From the finiteness of $LT_C^{ill}(g_{0})$ we deduce that after
  finitely many steps one gets $g_n$ with $LT_C^{ill}(g_n)=\emptyset$,
  which is equivalent to stating that $g=g_n$ is legalizing.
\end{proof}

\section{Stable indices, branching indices and gate indices}
\label{sec:index-branch-gate}

The content of this section is well known to the experts, or in close
proximity of well known facts; we assemble them here for the
convenience of the reader.  We will use some standard tools from
$\R$-trees and Outer space technology.  For background and terminology
the reader may consult \cite{Lu_conj-iwips}; further detail can be
found in \cite{CV} or \cite{Vog}.  We follow here mostly the original
source \cite{GJLL}.

\smallskip

For every expanding train track map $f: \Gamma \to \Gamma$ there
exists a non-negative real eigenvector $\vec v$ of the transition
matrix $M(f)$ which has real eigenvalue $\lambda > 1$, and any such
$\vec v$ determines an $\R$-tree $T = T(\vec v)$ (which in some cases
is called the {\em forward limit tree} and can be considered as
boundary point of Outer space $\CVN$).

The tree $T = T(\vec v)$ is obtained by choosing an arbitrary lift
$\tilde f: \tilde \Gamma \to \tilde \Gamma$ of the train track map $f$
to the universal covering $\tilde \Gamma$, and by defining $T$ to be
the metric space associated to the pseudo-metric $d_\infty$ on
$\tilde \Gamma$ which is the limit for $t \to \infty$ of the
decreasing sequence of pseudo-metrics
$d_t(x,y) := \frac{1}{\lambda^t}d_{\vec v}(\tilde f^t(x), \tilde
f^t(y))$.
Here the pseudo-metric $d_{\vec v}$ on $\tilde \Gamma$ is defined
through lifting the {\em $\vec v$-edge-lengths} of $\Gamma$ that are
explicitly given by the coefficients of the eigenvector $\vec v$.

As a consequence one obtains a canonical $\FN$-equivariant map
$i: \tilde \Gamma \to T$ which is {\em edge-isometric} with respect to
the pseudo-metric $d_{\vec v}$, i.e. every edge $e$ of $\tilde \Gamma$
is mapped by $i$ isometrically to its image $i(e) \subset T$.  The map
$\tilde f$ also induces directly a homothety $H: T \to T$ with
stretching factor $\lambda$, and one obtains the following
``commutative diagram'':
\begin{equation}
\label{comm-diag}
H \circ i = i \circ \tilde f 
\end{equation}
The map $i$ maps legal paths in $\tilde \Gamma$ isometrically to
segments in $T$. On the other hand, any path $\tilde \eta$ in
$\tilde \Gamma$ which is the lift of a periodic INP in $\Gamma$ is
folded by $i$ completely to a single segment, which is the isometric
image of any of the two legal branches of $\tilde \eta$.

It follows from standard train track arguments (see for example
Section~3 of \cite{KL}) that for any path $\gamma$ in $\Gamma$ a
sufficiently high $f$-iterate $f^t(\gamma)$ is homotopic
rel. endpoints to a {\em pseudo-legal} path, i.e. a legal
concatenation of legal paths and periodic INPs.  It follows that for
any two points $x, y \in \tilde \Gamma$ one has $i(x) = i(y)$ if and
only if after iterating $\tilde f$ sufficiently many times the
geodesic path $\tilde \gamma$ in the tree $\tilde \Gamma$ which joins
$\tilde f^t(x)$ to $\tilde f^t(y)$ is a legal concatenation of legal
subpaths and lifts of periodic INPs, where the legal subpaths only
use edges with $d_{\vec v}$-length 0. In particular, we see that the
absence of INPs for $f$ implies directly that the $\FN$-action on $T$
is free, if all the exponents of the the eigenvector $\vec v$ are
positive.  The latter is known if the transition matrix $M(f)$ is
primitive, and hence always true if the expanding train track map $f$
represents an iwip automorphism.  Furthermore, the North-South result
of the $\phi$-action on the closure of $\CVN$ proved in \cite{LL4}
yields:

\begin{prop}
\label{forward-limit-tree}
For any iwip automorphism $\phi$ the forward limit tree
$T = T(\vec v)$ is well defined up to uniform rescaling of the metric,
and in particular does not depend on the expanding train track
representative $f: \Gamma \to \Gamma$ and its primitive transition
matrix $M(f)$ with Perron-Frobenius eigenvector $\vec v$.

If $\Gamma$ doesn't contain any 
non-trivial loop which is a legal concatenation of periodic INPs,
then the $\FN$-action on
$T$ by isometries is free.  
This conclusion is in particular true if there is no periodic INP in $\Gamma$.
\qed
\end{prop}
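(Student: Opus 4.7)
The plan is to treat the two assertions separately: the uniqueness statement follows from the North-South dynamics of $\phi$ on $\overline{\CVN}$, while the freeness statement follows from the pseudo-legal path analysis already sketched in the discussion preceding the proposition.

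For the first assertion, I begin by observing that the equation $H\circ i = i\circ \tilde f$, together with the edge-isometric map $i:\tilde\Gamma \to T$ and the homothety $H$ of ratio $\lambda$, exhibits $T$ as a projective fixed point of the $\phi$-action on $\overline{\CVN}$: rescaling $T$ by $1/\lambda$ and retwisting the $\FN$-action by a lift $\Phi\in\Aut(\FN)$ of $\phi$ recovers $T$ up to isometric $\FN$-conjugation. The North-South result of \cite{LL4} then asserts that an iwip $\phi$ has exactly two projective fixed points in $\overline{\CVN}$, one attracting and one repelling, both represented by $\R$-trees. The construction of $T$ as the limit of the pseudo-metrics $d_t(x,y) = \frac{1}{\lambda^t}\, d_{\vec v}(\tilde f^t(x),\tilde f^t(y))$ matches, in the length-function topology on $\overline{\CVN}$, the forward dynamical convergence of the projective class $[(\Gamma,d_{\vec v})]\in \CVN$ under iteration of $\phi$; hence $[T]$ is one of the two $\phi$-fixed points, and in particular depends only on $\phi$, and not on the chosen expanding train track representative $f$ or on the Perron-Frobenius eigenvector $\vec v$.

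For the second assertion, suppose $g\in\FN\smallsetminus\{1\}$ fixes some point $p\in T$, and write $p = i(x)$ for some $x\in\tilde\Gamma$. The $\FN$-equivariance of $i$ gives $i(gx) = g\cdot i(x) = p = i(x)$, so by the pseudo-legal characterization of $i$-equivalence stated just above the proposition, for some integer $t\geq 1$ the geodesic in $\tilde\Gamma$ from $\tilde f^t(x)$ to $\tilde f^t(gx)$ is a legal concatenation of lifts of periodic INPs and of legal subpaths using only edges with $d_{\vec v}$-length $0$. The absence of INPs rules out the former, while the primitivity of $M(f)$ (which holds because $\phi$ is iwip) combined with Perron-Frobenius forces every entry of $\vec v$ to be strictly positive, so no edge has $d_{\vec v}$-length $0$. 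Hence the geodesic is trivial, i.e.\ $\tilde f^t(gx)=\tilde f^t(x)$. Using that $\tilde f$ is $\Phi$-equivariant for a lift $\Phi\in\Aut(\FN)$ of $\phi$, this reads $\Phi^t(g)\cdot\tilde f^t(x)=\tilde f^t(x)$; since $\FN$ acts freely on $\tilde\Gamma$ by deck transformations, $\Phi^t(g)=1$ and therefore $g=1$, a contradiction.

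The main obstacle lies in the first assertion, and concretely in upgrading the internal convergence of the pseudo-metrics $d_t$ to genuine dynamical convergence $\phi^t\cdot[T_0]\to[T]$ in the length-function topology on $\overline{\CVN}$; this step is where one really needs to know that $T$ defines a point of $\overline{\CVN}$ and that $\overline{\CVN}$ is separated by $\FN$-equivariant translation length functions (standard facts from \cite{CV}). Once that identification is made, \cite{LL4} closes out the argument. The second assertion is essentially routine given the preceding discussion, with the only subtle ingredient being the strict positivity of the Perron-Frobenius eigenvector $\vec v$, which is supplied by the iwip hypothesis via primitivity of $M(f)$.
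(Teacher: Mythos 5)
Your proposal is correct and follows essentially the same route as the paper: the well-definedness of $T$ is deduced from the North--South dynamics of \cite{LL4} applied to the projective class of the limit of the rescaled pseudo-metrics, and the freeness is deduced from the pseudo-legal characterization of $i$-fibers together with the strict positivity of the Perron--Frobenius eigenvector. The paper gives both steps only in the surrounding discussion and cites them, whereas you spell them out, but the content is the same.
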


As a direct consequence of the above described construction of $T$
from $\tilde \Gamma$ by means of the eigenvector $\vec v$ one has the
following fact, which is well known to the experts (see
\cite{GJLL}, \cite{HM}, \cite{Lu_conj-iwips}, or, for much detail, \S~7
of \cite{KL-7steps-v3}). Recall that a {\em direction} at a point
$P \in T$ is a connected component of $T \smallsetminus \{P\}$.

\begin{prop}
\label{gates-branch-points}
Let $f: \Gamma \to \Gamma$ be an expanding train track map, and let
$\vec v$ be an eigenvector of $M(f)$ with eigenvalue $\lambda > 1$.
Assume that $\vec v$ has only positive coefficients, and let
$T = T(\vec v)$ the corresponding forward limit tree.

\smallskip
\noindent
(1) If there is no periodic INP for $f$, then the map $i$ restricts to
an $\FN$-equivariant bijection $i_V$ between essential vertices $v_k$
of $\tilde \Gamma$ (i.e. lifts of 
$f$-periodic 
vertices of $\Gamma$ with 3 or more
gates) 
on one hand, and branch points $i(v_k)$
of $T$ on the other. This bijection extends to a canonical bijection
between the gates at any $v_k$ and the directions at $i(v_k)$ (where a
gate $\mathfrak g_j$ is mapped to the direction that contains the open
segments $i(\inter e_i)$ for any edge $e_i$ in $\mathfrak g_j$).

\smallskip
\noindent
(2) [Not used in the sequel.]  If $f$ possesses periodic INPs, then
the endpoints (assumed to have been made into vertices) of any 
such 
periodic INP 
$\eta$ 
have to be considered as equivalent, and
for these endpoints one has to identify those 
two 
gates which contain the
two branches of 
$\eta$. 
Then we get the precisely analogous statement
as in the ``no INP'' case (1), except that the preimage 
of a
branch point in $T$ will now be 
the lift of an 
$f$-periodic equivalence 
class 
of
vertices in 
$\Gamma$ 
with (after the above identification) 3 or
more gates.  
\qed
\end{prop}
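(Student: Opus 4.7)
The plan is to exploit the commutative relation $H\circ i = i\circ\tilde f$ together with the recalled characterization of when $i(x) = i(y)$ to match up branch points of $T$ with essential vertices of $\tilde\Gamma$, and then extend to a gate--direction bijection. The $\FN$-equivariance of the asserted map is built into the construction of $i$ (both pseudo-metrics $d_t$ are $\FN$-invariant); Part~(2) proceeds in parallel to Part~(1) after enlarging the equivalence on $\tilde\Gamma$ by the identifications forced by lifts of periodic INPs, so I focus the plan on Part~(1).

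First I would show that every essential vertex $\tilde v$ of $\tilde\Gamma$ has $i(\tilde v)$ a branch point, and that each gate at $\tilde v$ gives rise to a unique direction at $i(\tilde v)$. If edges $e, e'$ at $\tilde v$ lie in \emph{distinct} gates, the turn $(\bar e, e')$ is legal, so $\bar e\cdot e'$ is a legal path mapped isometrically by $i$, and hence $i(e), i(e')$ emanate from $i(\tilde v)$ in distinct directions. If $e, e'$ lie in the \emph{same} gate, the intrinsic-gate definition supplies $t\geq 1$ with $\tilde f^t(e)$ and $\tilde f^t(e')$ sharing a non-trivial common initial subpath; applying $i$ and using that $H^t$ is an injective homothety, $i(e)$ and $i(e')$ must share a non-trivial initial subsegment, so they point into the same direction at $i(\tilde v)$. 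As $\tilde v$ has $\geq 3$ gates, $i(\tilde v)$ has $\geq 3$ directions.

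Next I would prove injectivity of $i$ on essential vertices. If $i(\tilde v_1) = i(\tilde v_2)$, the recalled characterization forces $[\tilde f^t([\tilde v_1, \tilde v_2])]$, for some $t \geq 1$, to be a concatenation of legal pieces built only from edges of $d_{\vec v}$-length zero together with lifts of periodic INPs; by the positivity of $\vec v$ and the no-INP hypothesis, both ingredients are absent, so the reduced geodesic is trivial, i.e.\ $\tilde f^t(\tilde v_1) = \tilde f^t(\tilde v_2)$. Combined with the freeness of the $\FN$-action on $\tilde\Gamma$ by deck transformations and the fact that $f$ acts as a bijection on the finite set of its periodic vertices in $\Gamma$, this forces $\tilde v_1 = \tilde v_2$. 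For surjectivity, given a branch point $P \in T$, pick any preimage $x \in i^{-1}(P)$. If $x$ lies in the interior of an edge, there are only two local directions at $x$ in $\tilde\Gamma$, giving at most two directions at $P$; any further direction must come from another preimage $x' \sim x$. I would argue that when $x'$ is also an interior point, it contributes the \emph{same} pair of directions as $x$: indeed, two legal paths in $\tilde\Gamma$ can diverge only at a vertex, so at the common $\tilde f^t$-image of $x$ and $x'$ the two incident legal paths locally agree in direction, and this direction information pushes back to $P$ through $H^t$. Hence any third direction at $P$ must come from a vertex preimage $\tilde u$, and reverse-applying Step~1 forces $\tilde u$ to have at least three gates. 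Finally, periodicity of $\tilde u$ (so that it is essential) follows from the standard iwip fact that $T$ has only finitely many $\FN$-orbits of branch points: some $H^m$ fixes the $\FN$-orbit of $i(\tilde u)$, giving $H^m(i(\tilde u)) = g\cdot i(\tilde u)$, which via the injectivity of Step~2 yields $\tilde f^m(\tilde u) = g\cdot \tilde u$ and hence $f$-periodicity of the image of $\tilde u$ in $\Gamma$. The gate--direction bijection at $i(\tilde v)$ then follows from Step~1 combined with the same pullback argument applied locally at $i(\tilde v)$.

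The main obstacle is the surjectivity step, specifically ruling out ``internal'' branch points of $T$ arising purely from the folding of $i$ on edges in a common gate. The crucial control comes from the fact that legal paths in $\tilde\Gamma$ can split only at vertices, so any splitting point of a folded pair of edge-images must coincide with the $i$-image of a vertex; combined with Proposition~\ref{Nielsenpaths} (which ties unresolved foldings to periodic INPs) and the no-INP hypothesis, this is what ultimately delivers the bijection.
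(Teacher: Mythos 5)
The paper offers no proof of Proposition~\ref{gates-branch-points}: it is stated with a \qed{} as a ``direct consequence of the construction'' and referred to \cite{GJLL}, \cite{HM}, \cite{Lu_conj-iwips} and \S7 of \cite{KL-7steps-v3}, so your argument can only be judged on its own merits. Your Step~1 (distinct gates at an essential vertex give distinct directions, germs in a common gate give the same direction, via $H^t\circ i=i\circ\tilde f^t$) and Step~2 (injectivity on essential vertices, via the characterization of $i(x)=i(y)$, freeness of the deck action, and bijectivity of $f$ on its periodic vertices) are correct and are the standard argument.

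The surjectivity step, however, contains two genuine gaps. First, the claim that two interior-point preimages $x,x'$ of a branch point $P$ ``contribute the same pair of directions'' is not delivered by ``legal paths diverge only at vertices'': after pushing forward to the common image $z=\tilde f^t(x)=\tilde f^t(x')$, the two halves of the edge through $x$ land in two distinct gates at $z$, while those of the edge through $x'$ may land in a \emph{different} pair of gates, so $P$ can acquire three or more directions from interior preimages alone. The correct move, which you gesture at but do not execute, is to take finitely many preimages accounting for all directions at $P$ and push them forward until they coincide at a single point $z$; since directions at $P$ biject via $H^t$ with directions at $H^t(P)=i(z)$ determined by germs at $z$, and an interior point has only two germs while germs in one gate determine one direction, $z$ must be a vertex with at least three gates --- and one must then transport this back to $P$ itself using the finiteness of $\FN$-orbits of branch points and the fact that $H$ permutes them. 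Second, your periodicity argument is circular: from $H^m(i(\tilde u))=g\cdot i(\tilde u)$ you invoke ``the injectivity of Step~2'' to conclude $\tilde f^m(\tilde u)=g\tilde u$, but Step~2 is established only for \emph{periodic} vertices, which is precisely what you are trying to prove; the characterization of $i(x)=i(y)$ yields only $\tilde f^{m+m'}(\tilde u)=\Phi^{m'}(g)\tilde f^{m'}(\tilde u)$ for some $m'\geq 0$, i.e.\ pre-periodicity, and one must replace $\tilde u$ by the periodic vertex $\tilde f^{m'}(\tilde u)$ (suitably translated so that it still maps to $P$), which still has at least three gates because $D\tilde f$ is injective on gates. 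Finally, for the asserted \emph{bijection} between gates at $v_k$ and directions at $i(v_k)$ you must rule out that some other preimage of $i(v_k)$ contributes a new direction; this is exactly where Lemma~\ref{periodic-gates} is needed (for periodic $v_k$ the map $Df^t$ is a bijection on gates, so the gates at $\tilde v_k$ already exhaust, via $H^t$, all gates at $\tilde f^t(\tilde v_k)$ and hence all directions contributed by any other preimage), and your appeal to ``the same pullback argument applied locally'' leaves this point unaddressed.
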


If $X$ is a topological space, provided with a left action of a group
$G$ by homeomorphisms, we say that a map $F: X \to X$ {\em represents}
an automorphism $\Phi \in \Aut(G)$ if for all $x \in X$ and all
$g \in G$ one has:
\[
\Phi(g) \cdot F(x) = F(g \cdot x)
\]
This applies in particular to the special case where $X$ is the
universal covering of the quotient space $X/G$ with $\pi_1(X) = G$,
and $F$ induces a homeomorphism $f: X/G \to X/G$.  In this case, if
$F$ represents $\Phi$, then $f$ induces on $\pi_1(X) = G$ the outer
automorphism $\phi$ defined by $\Phi$.

\smallskip

It follows from the equality (\ref{comm-diag}) and the
$\FN$-equivariance of the map $i$ that any lift $\tilde f$ of the
train track map $f$ represents the same automorphism
$\Phi \in \Aut(\FN)$ as the associated homothety $H: T \to T$, where
$\Phi$ induces (by a the previous paragraph) the outer automorphims
$\phi$ that is represented by the train track map $f$. Since the
stretching factor of $H$ satisfies $\lambda > 1$, it follows that $H$
has precisely one fixed point $Q$ which is either contained in $T$, or
it lies in the metric completion $\bar T$ of $T$ (where we use the
canonical extension of $H$ to $\bar T$).

\smallskip

Let now $\Phi' \in \Aut(\FN)$ be a second lift of $\phi$, and assume
that $\Phi'$ is isogredient to $\Phi$, i.e.
$\Phi' = \iota_w \circ \Phi \circ \iota_{w^{-1}} = \iota_w \circ
\iota_{\Phi(w)^{-1}} \circ \Phi$
for some $w \in \FN$, where $\iota_v: \FN \to \FN$ denotes the
conjugation $u \mapsto v u v^{-1}$.  Assume furthermore that the lift
$\tilde f': \tilde \Gamma \to\tilde \Gamma$ of $f$ and the homothety
$H': T\to T$ both represent the automorphism $\Phi'$.  Then we obtain
$\tilde f' = w \tilde f w^{-1}$ and $H' = w H w^{-1}$, and thus deduce
for the fixed point $Q'$ of $H'$ the equality $Q' = w Q$.

Conversely, if $H' = u H$ is the homothety of $T$ which represent some
lift $\Phi' = i_u \Phi$ of $\phi$, then, if $H'(Q) = Q$, the action of
$u$ on $T$ must fix the point $Q$. Thus, if the $\FN$-action on $T$ is
free, then one deduces $u = 1$.

As a consequence one gets a natural injective map from the
isogredience classes of lifts $\Phi_k$ of $\phi$ into the set of
$\FN$-orbits of points in $\bar T$, given by the fixed point $Q_k$ of
the associated homothety $H_k: T \to T$ that represents $\Phi_k$.

\smallskip

In \cite[Theorem~2.1~(3) and Proposition~4.4]{GJLL}, the following has
been proved (we only cite the easy case where $\Stab (Q)$ is trivial):

\begin{prop}
\label{prop-4-4-GJLL}
Let $\phi \in \Out(\FN)$ be an iwip automorphism, and let
$T = T(\vec v)$ be the forward limit tree of $\phi$, given as above by
some eigenvector $\vec v$ with eigenvalue $\lambda > 1$ of the
transition matrix $M(f)$ of a train track representative
$f: \Gamma \to \Gamma$ of $\phi$. We assume that there is no periodic
INP in $\Gamma$, so that the $\FN$-action on $T$ is free.

Let $H: T \to T$ be the homothety (with stretching factor $\lambda$)
that represents some lift $\Phi \in \Aut(\FN)$ of $\phi$, and assume
$\Phi$ has index ${\rm Ind}(\Phi) > 0$. Then the fixed point $Q$ of
$H$ is contained in $T$, and $Q$ is a branch point of $T$.  There is a
natural injection $i_Q$ from the set of attracting fixed points of
$\Phi$ on $\partial \FN$ to the set of directions at $Q \in T$. The
image of $i_Q$ is precisely the set of those directions that are fixed
by $H$.  \qed
\end{prop}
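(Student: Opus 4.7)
The plan is to attach to each attracting fixed point $X \in \partial \FN$ of $\Phi$ a legal ray in $T$ emanating from $Q$, and then to check that this correspondence is a bijection onto the set of $H$-fixed directions at $Q$. Throughout, I would use the edge-isometric map $i:\tilde\Gamma \to T$ and the commutation $H \circ i = i \circ \tilde f$. As a first step I would show $Q \in T$: since $\ind(\Phi) > 0$ one has $a(\Phi) \geq 3$ attractors $X_1, \ldots, X_a$, and after replacing $f$ by a sufficiently high positive power (which alters neither $T$, $H$, $Q$, nor the set of attractors), standard iwip train-track theory, combined with the absence of periodic INPs, provides for each $X_k$ a legal ray $R_k$ in $\tilde \Gamma$ based at a $\tilde f$-fixed vertex $v_k$ whose end in $\partial \tilde\Gamma \cong \partial \FN$ is $X_k$, such that $\tilde f(R_k) \subsetneq R_k$. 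Since $R_k$ is legal, $i(R_k)$ is an isometric ray in $T$, and the commutation relation forces $H$ to act on $i(R_k)$ as a strict homothety of ratio $\lambda > 1$ with fixed point $i(v_k)$. Uniqueness of the fixed point of $H$ in $\bar T$ gives $i(v_k) = Q \in T$ for all $k$; using freeness of the $\FN$-action on $T$ (Proposition~\ref{forward-limit-tree}) and $\FN$-equivariance of $i$, one may furthermore choose representatives so that all $v_k$ coincide with a common vertex $v \in \tilde\Gamma$.

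Next I would construct $i_Q$ and establish the branch point statement together with the characterization of its image. The $a \geq 3$ rays $R_k$ leave $v$ through pairwise distinct gates, for two rays sharing a common initial edge would, by legality and by comparison of their $\tilde f^n$-images, share an infinite tail and hence determine the same point of $\partial \FN$. By Proposition~\ref{gates-branch-points}(1), gates at $v$ are in bijection with directions at $Q$, so $Q$ has at least $a \geq 3$ directions and is therefore a branch point, and $i_Q(X_k) := $ direction containing $i(R_k)$ is a well-defined injection. One inclusion of the image statement is immediate, since $H(i(R_k)) \subsetneq i(R_k)$ forces $i_Q(X_k)$ to be $H$-fixed. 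For the converse, given an $H$-fixed direction $d$ at $Q$, the union of $H^n$-iterates of any small initial segment of $d$ is an $H$-invariant geodesic ray $\rho$ in $T$ based at $Q$; via Proposition~\ref{gates-branch-points}(1) it lifts canonically to a legal ray $\tilde\rho$ in $\tilde\Gamma$ based at $v$, and the relation $H\circ i = i\circ\tilde f$ together with $H$-invariance of $\rho$ forces $\tilde f(\tilde\rho)$ to share an infinite tail with $\tilde\rho$. The endpoint of $\tilde\rho$ in $\partial\FN$ is therefore $\Phi$-fixed, and attracting because $\lambda > 1$ causes $H$ to expand $\rho$ away from $Q$.

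The main obstacle is the first step: canonically associating to each attractor $X_k$ a legal ray $R_k$ based at an $\tilde f$-fixed vertex, with $i$ mapping it isometrically into $T$, and identifying the common base vertex $v$ as the (unique orbit representative of the) preimage of $Q$. This relies on several standard but delicate pieces of iwip technology --- eventual legality of rays converging to attractors, identification of $\tilde f$-fixed vertices with lifts of essential vertices of $\Gamma$, and positivity of the Perron-Frobenius eigenvector $\vec v$ to prevent $i$ from collapsing any initial segment of $R_k$. Once this correspondence is secured, the remaining steps flow cleanly from the edge-isometric structure of $i$, freeness of the $\FN$-action on $T$, and Proposition~\ref{gates-branch-points}.
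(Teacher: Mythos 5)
The paper does not prove this proposition at all: it is quoted verbatim from \cite{GJLL} (Theorem~2.1~(3) and Proposition~4.4), which is why it carries a \verb|\qed| with no argument. So there is no ``paper proof'' to compare against; what you have written is a reconstruction of the original \cite{GJLL} argument, and as a sketch it follows the standard route correctly: attractors of $\Phi$ correspond to eigenrays of $\tilde f$ based at $\tilde f$-fixed vertices, the edge-isometry $i$ and the relation $H\circ i = i\circ\tilde f$ push these to $H$-invariant rays at the fixed point $Q$, uniqueness of the fixed point of a $\lambda>1$ homothety in $\bar T$ places $Q=i(v)$ in $T$, and Proposition~\ref{gates-branch-points}~(1) converts ``at least $a(\Phi)\ge 3$ gates used at $v$'' into ``$Q$ is a branch point''. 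You are also right that the genuinely hard input is the attractor--eigenray correspondence; that is precisely the content of the cited fixed-point theory, so invoking it is consistent with how the paper itself treats the statement.

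Two small corrections. First, your inclusions are reversed: an eigenray based at a fixed vertex satisfies $\tilde f(R_k)\supseteq R_k$ (indeed $\tilde f(R_k)=R_k$ as a subset), and likewise $H(i(R_k))\supseteq i(R_k)$, since $\tilde f$ and $H$ are expanding; the conclusion that the direction of $i(R_k)$ at $Q$ is $H$-fixed is unaffected. Second, be careful with ``replacing $f$ by a high power'': this proves the statement for $\Phi^n$ and $H^n$, whose attractor set and fixed-direction set may be strictly larger than those of $\Phi$ and $H$. For the statement as given (about $\Phi$ itself, with image of $i_Q$ equal to the $H$-fixed, not merely $H$-periodic, directions) you should work with eigenrays of $\tilde f$ itself, i.e.\ fixed directions of $D\tilde f$ at fixed points of $\tilde f$, which is what \cite{GJLL} does; the $\Phi$-equivariance of the correspondence then gives the image characterization without passing to powers. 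Also note that the common base vertex $v$ is most directly obtained from the absence of periodic INPs (a path joining two fixed points of $\tilde f$ would project to a Nielsen path), which is equivalent to your route via injectivity of $i$ on vertices when $\vec v$ is positive.
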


From the last sentence of this proposition we see that replacing
$\phi$ (and hence also $\Phi$ and $H$) by a positive power will
increase the image set of the map $i_Q$.  From Gaboriau and
Levitt~\cite{GL} finiteness result, one knows that for any $T$ with
free $\FN$-action every branch point has only finitely many
directions, so that a suitable positive power of $H$ will indeed fix
every direction of $T$ at $Q$.

The same finiteness result \cite{GL} also implies that there are only
finitely many $\FN$-orbits of branch points in $T$.  Thus through
possibly replacing $\phi$ by a further positive power we can assume
that the associated homothety $H$ of $T$ fixes every $\FN$-orbit of
branch points of $T$.  Thus for any branch point $Q'$ of $T$ there is
a suitable element $u \in \FN$ such that the homothety $H' = u H$ has
$Q'$ as fixed point (pick $u$ such that $H(Q') = u^{-1}Q'$). Now we
``perturb''
$\Phi$ and $\tilde f$ correspondingly to obtain
$\Phi' = \iota_u \circ \Phi$ and $\tilde f' = u \tilde f$, and, if
need be, we pass to another common positive power, so that the
homothety $H'$ fixes every direction of $T$ at $Q'$. As a consequence,
the map $i_{Q'}$ from Proposition~\ref{prop-4-4-GJLL} becomes a
bijection between the attracting fixed points of $\Phi'$ on
$\partial \FN$ and the directions of $T$ at $Q'$.

\smallskip

Thus we obtain:

\begin{prop}
\label{branch-points-index}
Let $\phi \in \Out(\FN)$ be an iwip automorphism that has an expanding
train track representative $f: \Gamma \to \Gamma$ without periodic
INPs, and let $T$ be its forward limit tree.

Then, for 
some integer $t \geq 1$, 
there exists a natural
bijection between on one hand the isogredience classes of
representatives $\Phi_k$ of $\phi^t$ which satisfy $\ind \Phi_k \geq
\frac{1}{2}$, and on the other hand the $\FN$-orbits $\FN \cdot Q_k$
of branch points $Q_k$ of $T$.

This correspondence extends further to a bijection between the set of
attractors for the induced $\Phi_k$-action on $\partial \FN$, and the
set of directions of $T$ at $Q_k$.  

The assertion remains valid if the integer $t$ is replaced by any positive integer $t' = k t \in \N$.
\qed
\end{prop}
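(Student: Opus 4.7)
The plan is essentially to assemble the pieces already developed in the discussion preceding the statement. First, I would invoke Proposition~\ref{forward-limit-tree} to ensure the $\FN$-action on $T$ is free (using the hypothesis that $f$ has no periodic INPs) and to justify using a common $T$ for $\phi$ and all of its positive powers, up to rescaling. Then for each lift $\Phi$ of $\phi$ with $\ind(\Phi) > 0$, Proposition~\ref{prop-4-4-GJLL} produces the associated fixed branch point $Q \in T$ of the homothety $H: T \to T$ representing $\Phi$. The assignment $\Phi \mapsto Q$ descends to a well-defined map from isogredience classes to $\FN$-orbits of branch points; injectivity follows because $H' = uH$ fixing $Q$ forces $uQ = Q$, and freeness of the $\FN$-action forces $u = 1$.

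Second, to obtain surjectivity after passing to a power, I would use the Gaboriau--Levitt finiteness result~\cite{GL}: both the number of $\FN$-orbits of branch points of $T$ and the number of directions at each branch point are finite. Choose $t \geq 1$ large enough so that the homothety of $T$ associated to $\phi^t$ stabilizes every $\FN$-orbit of branch points and moreover fixes every direction at every branch point. Then, given any branch point $Q' \in T$, I pick $u \in \FN$ with $H(Q') = u^{-1} Q'$, so that $H' := uH$ fixes $Q'$; the corresponding lift $\Phi' = \iota_u \circ \Phi^t$ of $\phi^t$ is the isogredience class one assigns to the orbit $\FN \cdot Q'$.

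Third, I would identify the threshold $\ind(\Phi_k) \geq \tfrac{1}{2}$ with ``$Q_k$ is a branch point''. From the formula $\ind(\Phi_k) = a(\Phi_k)/2 - 1$ we see that $\ind(\Phi_k) \geq \tfrac{1}{2}$ amounts to $a(\Phi_k) \geq 3$. By the choice of $t$ and Proposition~\ref{prop-4-4-GJLL}, the injection $i_{Q_k}$ from the attracting fixed points of $\Phi_k$ on $\partial \FN$ into the directions of $T$ at $Q_k$ becomes a bijection (because the homothety now fixes every direction there). Hence $a(\Phi_k) \geq 3$ is equivalent to $Q_k$ having at least three directions, i.e.\ to $Q_k$ being a branch point. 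This simultaneously establishes both claimed bijections: the correspondence of isogredience classes with $\FN$-orbits of branch points, and its refinement to attractors versus directions.

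The arguments are largely a matter of careful bookkeeping since the geometric input is all in hand. The only genuine obstacle is the choice of the exponent $t$: one must choose a single $t$ that simultaneously (i) fixes every $\FN$-orbit of branch points and (ii) fixes every direction at each branch point, uniformly over the finitely many orbits. This is exactly what the Gaboriau--Levitt finiteness guarantees, and after this is secured the remaining verifications --- well-definedness, injectivity via freeness, and surjectivity via the perturbation $\Phi \mapsto \iota_u \circ \Phi$ --- follow routinely.
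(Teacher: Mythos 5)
Your proposal is correct and follows essentially the same route as the paper, which proves this proposition by assembling exactly the pieces you cite: freeness of the $\FN$-action from the absence of periodic INPs, Proposition~\ref{prop-4-4-GJLL} for the fixed branch point and the injection $i_Q$, injectivity of $\Phi \mapsto Q$ via freeness, and the Gaboriau--Levitt finiteness result to choose a power $t$ after which the perturbed homotheties $uH$ fix every orbit of branch points and every direction, yielding surjectivity and upgrading $i_{Q_k}$ to a bijection. Your identification of the threshold $\ind(\Phi_k)\geq\tfrac12$ with $a(\Phi_k)\geq 3$, i.e.\ with $Q_k$ having at least three directions, matches the paper's intent.
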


Recall from the Introduction that the stable index list of an
automorphism $\phi \in \Out(\FN)$ without non-trivial periodic
conjugacy classes is given by the maximal (decreasing) list of indices
$\ind(\Phi_k) = \frac{a(\Phi_k)}{2} -1$ for representatives
$\Phi_k \in \Aut(\FN)$ of a suitable positive power of $\phi$ that are
pairwise non-isogredient, where $a(\Phi_k)$ denotes the number of
attractors of $\Phi_k$ on $\partial \FN$.

Thus Proposition~\ref{branch-points-index} shows that for an iwip
automorphism $\phi$, assumed to have an expanding train track
representative $f: \Gamma \to \Gamma$ without periodic INPs, the
stable index list of $\phi$ agrees with the {\em branching index list}
of $T$, i.e. the maximal (decreasing) sequence of values
$\ind(Q_k) := \frac{b(Q_k)}{2}-1$ for branch points $Q_k$ in distinct
$\FN$-orbits, where $b(Q_k)$ denotes the number of directions of $T$
at $Q_k$. On the other hand, we obtain directly from part (1) of
Proposition~\ref{gates-branch-points} that the branching index list of
$T$ agrees with the gate index list for the map
$f: \Gamma \to \Gamma$, as defined in Definition~\ref{gate-index}.
Thus we obtain:

\begin{cor}
\label{index-lists}
Let $\phi \in \Out(\FN)$ be an iwip automorphism, and let
$f: \Gamma \to \Gamma$ be an expanding train track map which
represents $\phi$ and which doesn't have any periodic INP.

Then the stable index list of the automorphism $\phi$ agrees with the
gate index list of the map $f$.  \qed
\end{cor}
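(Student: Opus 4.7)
The plan is to assemble the correspondence in two steps, passing through the forward limit tree $T = T(\vec v)$ associated to $f$ via a Perron-Frobenius eigenvector $\vec v$ of $M(f)$. Since $f$ represents an iwip automorphism, $M(f)$ has a power with all positive entries, so $\vec v$ has strictly positive coefficients; combined with the hypothesis that there is no periodic INP in $\Gamma$, this places us in the situation where Proposition~\ref{forward-limit-tree} applies and the $\FN$-action on $T$ is free.

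First, I would identify the gate index list of $f$ with the branching index list of $T$. By Proposition~\ref{gates-branch-points}~(1), the edge-isometric map $i:\tilde\Gamma \to T$ restricts to an $\FN$-equivariant bijection between essential vertices of $\tilde\Gamma$ (lifts of $f$-periodic vertices of $\Gamma$ with at least three gates) and branch points of $T$, and this bijection extends to a canonical bijection between the gates at such a vertex $v_k$ and the directions at $i(v_k) \in T$. Passing to $\FN$-orbits and noting that the gate index at $v_k$ equals $\frac{g(v_k)}{2}-1$ while the branching index of $i(v_k)$ equals $\frac{b(i(v_k))}{2}-1$, the two decreasing lists of indices coincide term by term.

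Second, I would identify the branching index list of $T$ with the stable index list of $\phi$. This is exactly the content of Proposition~\ref{branch-points-index}: for a sufficiently high positive power $\phi^t$, there is a natural bijection between isogredience classes of lifts $\Phi_k \in \Aut(\FN)$ of $\phi^t$ with $\ind(\Phi_k)\geq \tfrac12$ and $\FN$-orbits of branch points $Q_k$ of $T$, and moreover this correspondence extends to a bijection between the attractors of $\Phi_k$ on $\partial \FN$ and the directions of $T$ at $Q_k$. Hence $\ind(\Phi_k) = \frac{a(\Phi_k)}{2}-1 = \frac{b(Q_k)}{2}-1$, and by the definition of the stable index list as the (decreasing) list of positive indices of non-isogredient lifts of a large enough power of $\phi$, it matches the branching index list of $T$ exactly.

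Combining the two bijections yields the corollary. There is essentially no obstacle here beyond verifying that the listed references apply: the main point that needs care is simply that the hypotheses of Propositions~\ref{forward-limit-tree}, \ref{gates-branch-points} and \ref{branch-points-index} are available, namely the expansion of $f$, the primitivity of some power of $M(f)$ (guaranteed because $\phi$ is iwip), the positivity of $\vec v$, and the absence of periodic INPs. Once these are in place, the identification of the three lists---gate index list of $f$, branching index list of $T$, stable index list of $\phi$---is immediate term by term.
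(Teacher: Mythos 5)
Your proposal is correct and follows essentially the same route as the paper: the paper's own justification (the discussion immediately preceding the corollary) likewise identifies the stable index list with the branching index list of the forward limit tree $T$ via Proposition~\ref{branch-points-index}, and the branching index list with the gate index list via Proposition~\ref{gates-branch-points}~(1). Your added verification of the hypotheses (positivity of $\vec v$ from primitivity of $M(f)$, freeness of the action from the absence of periodic INPs) is exactly the bookkeeping the paper carries out earlier in Section~\ref{sec:index-branch-gate}.
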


In fact, we see from the details of the above correspondences that,
after raising $\phi$ and $f$ to a 
suitable 
common positive power, there is a
natural bijection between the essential vertices of $f$ and the
isogredience classes of $\phi$, which for any essential vertex $v_k$
and any representative $\Phi_k$ of the corresponding isogredience
class extends further to a bijection between the attracting fixed
points of $\Phi_k$ on $\partial \FN$ and the gates at $v_k$. This last
bijection can be seen concretely by considering the unique eigenray
$\rho$ defined by any gate at $v_k$, its lift to an eigenray
$\tilde \rho$ of a (suitably chosen) lift $\tilde f_k$ of $f$ for the
universal covering $\tilde \Gamma$, and the image
$i(\tilde \rho) \in T$, which is an eigenray of the associated
homothety $H_k$ which represents $\Phi_k$. Under the canonical
$\FN$-equivariant identification
$\partial \FN = \partial \tilde \Gamma$ the ray $\tilde \rho$
represents an attractor of the $\Phi_k$-action on $\partial \FN$, and
conversely, every attractor for $\Phi_k$ comes from such an eigenray.

\bibliographystyle{alpha}

\end{document}